\def\R{\mathbb R}
\def\C{\mathbb C}
\def\F{\mathscr F} 
\def\Fon{\F_{\operatorname{on}}}
\def\CP{\mathbb{CP}}
\def\Gr{{\mathrm{Gr}}}
\def\CC{\mathscr C} 
\def\restr{\negthickspace \mid}
\def\&{\wedge}
\def\realpart{\operatorname{Re}}
\def\bdet{\mathsf{det}}
\def\Q{\mathcal Q}
\def\cR{\mathcal R}
\def\cB{\mathcal B}
\def\II{\operatorname{II}}
\def\rJ{\mathsf J} 
\def\bJ{\mathbf J} 
\def\Jhat{\widehat{\bJ}}
\def\rR{\mathsf R}
\def\rP{\mathsf P}
\def\vv{\mathsf v}
\def\vw{\mathsf w}
\def\ri{\mathrm i}
\def\di{\partial}
\def\ve{\mathbf e}
\def\et{\tilde \ve}
\def\vn{\mathbf e}
\def\vf{\mathbf f}
\def\vp{\mathbf p}
\def\k{\delta}
\def\w{\omega}
\def\H{{\EuScript H}}
\def\Hon{\H_{\operatorname{on}}}
\def\I{{\mathcal I}}
\def\Iruled{\I}
\def\Ion{\I_{\operatorname{on}}}
\def\J{{\EuScript J}}
\def\K{{\EuScript K}}
\def\Khat{\widehat{\K}}
\def\ri{\mathrm i}
\def\ghat{\widehat{\gamma}}
\newtheorem{theorem}{Theorem}
\newtheorem{cor}[theorem]{Corollary}
\newtheorem{prop}[theorem]{Proposition}
\newtheorem*{theorem*}{Theorem}
\theoremstyle{definition}
\theoremstyle{remark}
\newtheorem{remark}[theorem]{Remark}
\begin{document}
\title{Ruled Austere Submanifolds of Dimension Four}
\author{Marianty Ionel}
\address{Department of Mathematics, University of Toledo, Toledo, OH; \hskip7in  Of \indent Institute of Mathematics, Federal University of Rio de Janeiro, Brazil}
\email{mionel@utnet.utoledo.edu}
\author{Thomas Ivey}
\address{Dept. of Mathematics, College of Charleston, Charleston SC 29424}
\email{iveyt@cofc.edu}

\keywords{austere submanifolds, minimal submanifolds, real K\"ahler submanifolds, exterior differential systems}
\subjclass[2000]{Primary 53B25; Secondary 53B35, 53C38, 58A15}

\begin{abstract}
We classify 4-dimensional austere submanifolds in Euclidean space ruled by 2-planes.
The algebraic possibilities for second fundamental forms of austere 4-folds $M$ were classified by Bryant, falling into three types which we label A, B, and C.
We show that if M is 2-ruled of Type A, then the ruling map from $M$ into the Grassmannian of 2-planes in $\R^n$ is holomorphic, and we give a construction for $M$ starting with a holomorphic curve in an appropriate twistor space.
If M is 2-ruled of Type B, then M is either a generalized helicoid in $\R^6$ or the product of two classical helicoids in $\R^3$.
If M is 2-ruled of Type C, then M is either a one of the above, or a generalized helicoid in $\R^7$.
We also construct examples of 2-ruled austere hypersurfaces in $\R^5$ with degenerate Gauss map.
\end{abstract}

\date{\today}
\maketitle

\section{Introduction}
A submanifold in Euclidean space $\R^n$ is {\em austere} if the eigenvalues of
its second fundamental form, in any normal direction, are symmetrically arranged around zero.
Thus, austere submanifolds are a special case of minimal submanifolds.
Harvey and Lawson \cite{HL} showed that austere submanifolds can be used to construct
special Lagrangian submanifolds in $T^*\R^n \cong \C^n$ which are ruled by their intersections
with the fibres.  Austere submanifolds of dimension 3 were completely classified by Bryant
\cite{Baustere}, but the case of dimension 4 is open.  In this paper we are concerned
with classifying austere submanifolds of dimension 4 which are themselves ruled by 2-planes.
(As we will see below in Corollary \ref{threeruled}, austere 4-manifolds ruled by 3-planes are easy to classify,
while on the other hand we expect the classification of austere submanifolds ruled by lines to be much more difficult.)

One family of ruled austere submanifolds defined in \cite{Baustere} are simple to describe.  A {\em generalized
helicoid} $M\subset \R^n$ is an $m$-dimensional submanifold swept out by $s$-planes rotating around a fixed axis,
with $n=m+s$ and $s<m$.  It can be parametrized by
\begin{multline}\label{heliparm}
(x_0, \ldots, x_{m-1}) \mapsto \\
(\lambda_0 x_0, x_1 \cos (\lambda_1 x_0), x_1 \sin (\lambda_1 x_0),
\ldots, x_s \cos (\lambda_s x_0), x_s \sin (\lambda_s x_0), x_{s+1}, \ldots, x_{m-1}),
\end{multline}
where $\lambda_0, \ldots, \lambda_s$ are constants with $\lambda_1,\ldots, \lambda_s$ nonzero.
Of course, this gives the classical
helicoid minimal surface when $s=1$ and $m=2$.  Notice also that if $m>s+1$ then this splits
as a product of a Euclidean factor with an `irreducible' helicoid swept out by $s$-planes in $\R^{2s+1}$.
Furthermore, when $\lambda_0=0$, $M$ is a cone over an austere submanifold in $S^{m-1}$ which
is ruled by $(s-1)$-dimensional totally geodesic spheres.

Helicoids (classical and generalized) will play a significant role in the latter part of this paper.
Among austere submanifolds, they have the following characterization:

\begin{theorem*}[Bryant \cite{Baustere}]  If $M \subset \R^n$ is austere, and $|\II|$ is {\em simple}
(i.e., at each point $p\in M$ the quadratic forms in $|\II_p|$ share a common linear factor) then $M$ is congruent to a generalized helicoid.
\end{theorem*}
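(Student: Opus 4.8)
The plan is to use adapted moving frames and to extract the helicoidal structure by integrating the structure equations, with the \emph{simple} hypothesis doing its work first at the algebraic level and then through the Codazzi equations. Fix $p\in M$, let $\omega^1$ be a unit covector spanning the common linear factor of $|\II_p|$, complete it to an orthonormal coframe $\omega^1,\ldots,\omega^m$ of $T_pM$ with dual frame $e_1,\ldots,e_m$, and adapt an orthonormal normal frame $e_{m+1},\ldots,e_n$. Writing $\II=\sum_a\II_a\otimes e_a$ with $\II_a=\sum_{i,j}h^a_{ij}\,\omega^i\omega^j$, simplicity gives $\II_a=\omega^1\cdot m_a$ for linear forms $m_a$. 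Since austere submanifolds are minimal, $0=\operatorname{tr}\II_a=h^a_{11}$ is exactly the $(\omega^1)^2$-coefficient of $\II_a$, so $m_a$ has no $\omega^1$ term, i.e. $m_a\in\langle\omega^2,\ldots,\omega^m\rangle$. Hence $h^a_{ij}=0$ whenever $i,j\ge 2$, each nonzero $\II_a$ has rank $2$ with eigenvalues $\pm\tfrac12|m_a|$ (automatically symmetric about $0$), and the austere condition collapses to minimality. All the remaining content therefore lies in how this degenerate second fundamental form propagates across $M$.

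Next I would establish the ruling. The distribution $D=\ker\omega^1=\langle e_2,\ldots,e_m\rangle$ satisfies $\II|_{D\times D}=0$. Differentiating $d\omega^a_i=-\omega^a_j\wedge\omega^j_i-\omega^a_b\wedge\omega^b_i$ and substituting $\omega^a_i=h^a_{ij}\omega^j$ with the $h^a_{ij}$ of the special form above produces the Codazzi identities for this $\II$; I expect these to force the connection forms $\omega^1_i$ ($i\ge 2$) to be multiples of $\omega^1$, equivalently $\nabla^M_X e_1=0$ for $X\in D$. That single fact gives both integrability of $D$ and, since $\nabla^{\R^n}_X Y=\nabla^M_X Y+\II(X,Y)=\nabla^M_X Y$ stays in $D$ for $X,Y\in D$, totally geodesic leaves, i.e. affine $(m-1)$-planes. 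Thus $M$ is ruled by $(m-1)$-planes, with first normal space $N_1=\operatorname{Im}\II$ of dimension at most $m-1$.

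Then I would take the $e_1$-flow as the transverse axis direction with parameter $x_0$ and adapt the tangent frame $e_2,\ldots,e_m$ together with a normal frame spanning $N_1$. Restricting to an integral curve of $e_1$, the evolution of this adapted frame along the axis is a linear ODE $\tfrac{d}{dx_0}(\text{frame})=A(x_0)(\text{frame})$ with $A$ skew, built from the connection coefficients evaluated on $e_1$. The crux is to show $A$ is constant, and that after a fixed rotation it becomes block-diagonal with $2\times 2$ rotation blocks of angular speeds $\lambda_1,\ldots,\lambda_s$; together with the straight axis direction contributing the linear term $\lambda_0 x_0$, integrating the ODE and reconstructing $x=\int\sum_i\omega^i e_i$ reproduces the parametrization \eqref{heliparm} with $s=m-1$, so $M$ is congruent to a generalized helicoid (splitting off a Euclidean factor whenever some $\lambda_i$ vanishes).

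The main obstacle is this third step: proving the angular-velocity data $A$ is constant along the axis. This cannot come from the pointwise algebra, which is vacuous beyond minimality once $\II$ is simple; it must be extracted by differentiating the Codazzi and Gauss equations and by using that simplicity with the minimal normalization $m_a\perp\omega^1$ holds at \emph{every} point of $M$. Concretely, I would show that the $d^2=0$ compatibility conditions force the $e_1$-derivatives of the coefficients $h^a_{ij}$ to vanish, thereby reducing the overdetermined frame system to a Frobenius-integrable one with constant coefficients; diagonalizing the resulting constant skew form $A$ into orthogonal $2$-planes is then routine linear algebra.
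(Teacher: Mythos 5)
First, note that the paper does not prove this statement: it is quoted from Bryant's paper \cite{Baustere} as a known result, so there is no in-paper argument to compare yours against, and I can only assess your outline on its own terms. Your first step is correct and complete: writing each quadric as $\omega^1\circ m_a$, minimality kills the $(\omega^1)^2$-coefficient, so $m_a\perp\omega^1$, every element of $|\II_p|$ then has eigenvalues $\pm\tfrac12|m|,0,\dots,0$, and austerity reduces to minimality. The remaining two steps, however, are plans rather than proofs, and the first of them conceals a genuine difficulty.

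The claim in your second step --- that Codazzi forces $\omega^1_i\equiv 0\bmod\omega^1$ for $i\ge2$ --- is not true for an arbitrary choice of the common linear factor. Writing $m_a=\sum_j c^a_j\omega^j$ and $A_{ik}=\omega^1_i(e_k)$, the full symmetry of the covariant derivative $h^a_{ijk}$ for $i,j,k\ge2$ yields identities of the form $c^a_jA_{ik}+c^a_iA_{jk}=c^a_kA_{ij}+c^a_iA_{kj}$, and these force $A$ to vanish on $D\times D$ only when $\operatorname{span}\{m_a\}$ has dimension at least $2$ --- equivalently, only when the common factor is unique. When $\dim|\II_p|=1$ the single quadric $\omega^1\circ\omega^2$ has two linear factors and only one of the two candidate distributions is totally geodesic: on the classical helicoid one asymptotic family consists of the straight rulings, the other of helices. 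Worse, the statement as transcribed is actually false in that degenerate case (a catenoid times $\R^{m-2}$ is austere with simple $|\II|$ but is no generalized helicoid), so Bryant's theorem carries a hypothesis such as $\dim|\II_x|\ge2$ that your argument must invoke at precisely this point, and your outline does not. Your third step --- constancy of the skew matrix $A(x_0)$ governing the frame along the axis and its reduction to commuting $2\times2$ rotation blocks --- is, as you yourself say, the crux, and it is left entirely as an intention: nothing in the outline establishes that the $e_1$-derivatives of the $h^a_{ij}$ vanish, that the first normal bundle is parallel, or that the resulting overdetermined frame system is Frobenius with constant coefficients. Until the second and third steps are actually carried out, with the dimension hypothesis made explicit, this is a plausible strategy but not a proof.
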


Here, $|\II_p| \subset S^2 T^*M$ is the subspace spanned by the second fundamental form $\II$ in various normal directions.
In more detail, we recall that the second fundamental form of a submanifold $M \subset \R^n$ is
a tensor defined by
$$D_X Y = \nabla_X Y +\II(X,Y),$$
where $X,Y$ are tangent vector fields on $M$, $D$ is the Euclidean connection on $\R^n$ and the right-hand side is split
into the tangential and normal components.  Thus, $\II$ is a section of $S^2 T^*M \otimes NM$, where $NM$ is the normal bundle,
and $|\II_p|$ is the image of $\II$ under contraction with a basis for $N^*_pM$.
Let $\delta$ denote $\dim |\II_p|$, which will be constant on an
open dense set of each connected component of $M$.
We will assume $M$ is connected, and refer to the constant $\delta$ as the {\em normal rank}
of $M$.

Using an orthonormal basis to identify $T_pM$ with $\R^4$, we see that $|\II_p|$ must correspond
to a subspace of the space $S^2 \R^4$ of $4\times 4$ symmetric
matrices which is an {\em austere subspace}, in which every matrix has eigenvalues symmetrically arranged around zero.
The {\em maximal} austere subspaces $\Q \subset S^2 \R^4$ were determined (up to $O(4)$ conjugation) by Bryant \cite{Baustere},
and fall into three types which we have labeled as $\Q_A$, $\Q_B$ and $\Q_C$.
We say that an austere submanifold is $M$ is of type A,B,C
according to which one of these maximal subspaces $|\II_p|$  lies in, given an appropriate choice of orthonormal basis for $T_pM$.
We make the blanket assumption that the type is constant on an open subset of $M$, with
the exception of the case where $\delta=1$.  In that case, the notion of `type' breaks down,
as we can describe
all 1-dimensional austere subspaces of $S^2 \R^4$ up to conjugation, using diagonalization.

We now summarize our results.
The particular forms for $\Q_A, \Q_B, \Q_C$ will be given later; for now, we recall that
austere submanifolds of Type A carry a complex structure, denoted by $\rJ$, with
respect to which the metric on $M$ is K\"ahler whenever $\delta \ge 2$ (cf. Corollary 7 in \cite{ayeaye}).
In each of the following theorems,
we assume that $M$ is a austere submanifold in $\R^n$,
ruled by 2-planes, and $E_p \subset T_p M$ denotes the 2-dimensional subspace tangent to the ruling.

\begin{theorem}  If $M$ is Type A, then $E=\rJ(E)$, $\delta \le 4$,
and the ruling map $\gamma_E: M \to \Gr(2,n)$ is holomorphic.
If $\delta=2$ or $\delta=4$, then there is a compact complex manifold $V$ which fibers
over $\Gr(2,n)$, and a holomorphic mapping $\Gamma: M\to V$, which factors $\gamma_E$, and whose image is a
complex curve  $\CC$.  Moreover, any generic curve $\CC$ is locally the image of
an austere submanifold $M$ with these properties.
\end{theorem}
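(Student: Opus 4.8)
The plan is to peel off the pointwise linear algebra (the assertions $E=\rJ(E)$ and $\delta\le 4$) first, then bootstrap to the holomorphicity of $\gamma_E$, and only at the end build the twistor space $V$ and run the converse. For the first step I would record that each leaf of the ruling is an open piece of an affine $2$-plane, hence totally geodesic in $\R^n$, so $\II$ vanishes on $E\times E$. For Type A with $\delta\ge2$ the induced metric is K\"ahler and every shape operator anticommutes with $\rJ$; complexifying, this says $\II$ annihilates the mixed space $T^{1,0}M\times T^{0,1}M$ and is recorded, for each normal direction $\xi$, by a single complex quadratic form $q_\xi\in S^2(T^{1,0}_pM)^*$. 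Thus $|\II_p|$ embeds as a real subspace $W\subset S^2(T^{1,0}_pM)^*\cong\C^3$. I would then dichotomize on $E\otimes\C$: either it meets $T^{1,0}M$, in which case $E$ is a $\rJ$-complex line $L=\langle\epsilon\rangle$ and $\II|_{E}=0$ reduces to the single complex-linear equation $q_\xi(\epsilon,\epsilon)=0$, cutting $W$ down to (at most) a complex hyperplane and so forcing $\delta=\dim_\R W\le4$; or the projection $E\otimes\C\to T^{1,0}M$ is an isomorphism, in which case $\II|_E=0$ becomes an antilinear reality condition tying each $q_\xi$ to its conjugate and confining $W$ to a totally real $3$-plane. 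In the latter case the shape operators are simultaneously diagonalizable over $\R$, so $|\II_p|$ is of the simple (reducible) type rather than genuinely Type A, contradicting the standing hypothesis. Hence $E=\rJ(E)$ and $\delta\le4$.

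For holomorphicity of $\gamma_E$ I would identify the Grassmannian of oriented $2$-planes in $\R^n$ with a complex quadric $\Q\subset\CP^{n-1}$ via $F\mapsto[w-\ri\,w']$ for a positively oriented orthonormal basis $(w,w')$ of $F$, the orientation of $E_p$ being supplied by $\rJ$. Concretely $\gamma_E(p)=[\,v-\ri\,\rJ v\,]$ for any nonzero $v\in E_p$; one checks $\langle v-\ri\rJ v,\,v-\ri\rJ v\rangle=0$, so the image lies on $\Q$, and the expression is well defined up to complex scale because $E_p$ is $\rJ$-invariant. Holomorphicity, i.e.\ $d\gamma_E\circ\rJ=\ri\,d\gamma_E$, then follows by differentiating this formula and using $\nabla\rJ=0$ together with $\II(\rJ X,v)=\II(X,\rJ v)$; this is a direct but careful frame computation. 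I would also note that $\delta\in\{2,4\}$ are precisely the cases in which $W$ is a \emph{complex} subspace of $S^2(T^{1,0}_pM)^*$ (of complex dimension $1$ or $2$), which is what makes the subsequent construction genuinely holomorphic rather than merely real-analytic.

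For $\delta\in\{2,4\}$ I would build $V$ as a compact holomorphic fibre bundle over $\Q$ whose fibre records the first-order data of a ruling (an affine, jet-type datum pinning down the moving $2$-plane), the fibre differing in the two cases. Lifting $\gamma_E$ to this data gives a map $\Gamma:M\to V$; holomorphicity of $\Gamma$ follows as above from the K\"ahler and anticommuting relations, and since $M$ is swept out by the $\rJ$-complex rulings, $\Gamma$ is constant along each ruling, so its image is a complex curve $\CC$ and $\Gamma$ factors $\gamma_E$. For the converse I would set up the exterior differential system whose integral manifolds are the austere $2$-ruled Type A submanifolds, show that a generic holomorphic curve $\CC\subset V$ supplies Cauchy data that integrate (by solving the structure equations along each ruling) to a $2$-ruled $M$, and verify that the Gauss and Codazzi equations together with the algebraic austere type are built into the system, so the reconstructed $M$ is austere of Type A with ruling map factoring through $\CC$.

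The main obstacle is this last part: identifying the correct bundle $V$ and proving the converse, where one must guarantee that a generic holomorphic curve integrates up to a genuinely austere $M$---that is, that the pointwise austere algebraic type is preserved by the construction and that the relevant system is involutive so the integration closes. By comparison, excluding the totally real degenerate case in the proof of $E=\rJ(E)$ is subtle but entirely self-contained.
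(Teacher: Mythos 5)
There are genuine gaps at two of the three stages of your plan, and they are not the ones you flag as the main obstacles. The first is your exclusion of the totally real case $E\cap\rJ(E)=0$. You claim that there the condition $\II|_E=0$ confines $|\II_p|$ to a totally real $3$-plane whose elements are ``simultaneously diagonalizable over $\R$,'' hence simple, contradicting Type A. This is false as a pointwise statement: the subspace of $\Q_A$ annihilating such a plane $E$ is $3$-dimensional, and its elements need not commute nor share a common linear factor, so there is no pointwise contradiction to derive. The paper must exclude this case (its Case A.2) by genuinely differential means --- for $\delta=2$ by citing Theorem 15 of \cite{ayeaye}, and for $\delta=3$ by an analysis of the integrability conditions of the exterior differential system $\Iruled$ showing that no integral manifolds exist. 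So the step you describe as ``subtle but entirely self-contained'' is in fact not self-contained at all; as written it would prove something untrue about the pointwise algebra.

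The second gap is in the holomorphicity of $\gamma_E$. The ingredients you list --- $\nabla\rJ=0$ and $\II(\rJ X,v)=\II(X,\rJ v)$ --- do control the \emph{normal} component of $d\gamma_E$ (the part valued in $E^*\otimes NM$), and that computation goes through exactly as you say. But $T_{E}\Gr(2,n)\cong E^*\otimes\R^n/E$ also has a tangential piece $E^*\otimes (TM/E)$, governed by the connection forms $\w^3_1,\w^3_2,\w^4_1,\w^4_2$ restricted to $M$. The ruling and K\"ahler conditions only force these into the form \eqref{uvxycof} with four free coefficients $u,v,x,y$; holomorphicity additionally requires $v=x$, $u=-y$, equivalently $\nabla_{\rJ X}v-\rJ\nabla_X v\in E$ for sections $v$ of $E$, and this does \emph{not} follow from $\nabla\rJ=0$ or from the algebraic shape of $\II$. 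The paper extracts it from the Codazzi equations ($d\theta^5_i\equiv0$) after normalizing a generator of $|\II|$ to the form \eqref{Sform}; this is the substance of Proposition \ref{Grassprop} and is missing from your sketch. Finally, your third stage is only a plan (as you concede), but note that the actual statement is sharper than your outline: the fibre of $V$ records a $6$-plane $F\supset E$ with an orthogonal complex structure extending $\rJ$ (so $V=SO(6)/U(3)\cong\CP^3$ for $\delta=2$ and $V=SO(n)/SO(2)\times U(2)\times SO(n-6)$ for $\delta=4$), the curve $\CC$ must be an integral of a specific holomorphic contact system $\K$ on $V$ with nonsingular projection to $\Gr(2,n)$ rather than an arbitrary generic holomorphic curve, and the converse requires first restricting to the locus where integrability conditions on the normal metric hold (e.g.\ $|\ve_5|=|\ve_6|$, $\ve_5\cdot\ve_6=0$) before Cartan--K\"ahler can be applied.
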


It is important to note that, when $n$ is even, the submanifolds $M$ here
are not in general holomorphic submanifolds of $\C^{n/2}$; our analysis shows that the space
of local solutions is too large for this to be the case.

The particular complex manifolds $V$ are defined in \S\ref{typeApart}
below for various values of $\delta$.
We refer to these as `twistor spaces',
with the justification that they are lower-dimensional homogeneous spaces in which we can take
solutions of well-known or canonical systems of PDE (e.g., the Cauchy-Riemann equations,
in the case of holomorphic curves) and use them to construct austere 4-manifolds with
the desired properties.

\begin{theorem}  If $M$ is Type B, then $\delta \le 2$.  If $\delta=2$, then $M$ is either a generalized helicoid
in $\R^6$, or the product of two classical helicoids in $\R^3$, or is also of Type A.  (In the middle case,
 the product need not be orthogonal.)
\end{theorem}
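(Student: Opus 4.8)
The plan is to combine the algebraic structure of $\Q_B$ with the single geometric consequence of the ruling and then to carry out a first-order moving-frame analysis. The ruling hypothesis enters as follows: each ruling is a piece of an affine $2$-plane, hence totally geodesic in $\R^n$, so for $X,Y$ tangent to a ruling the ambient derivative $D_XY$ remains tangent to it and $\II(X,Y)=0$. Thus every form in $|\II_p|$ restricts to zero on $E_p$, i.e.\ $E_p$ is a common null $2$-plane of $|\II_p|\subset\Q_B$. In the normal form of $\Q_B$ there is a complementary pair $P_+\oplus P_-=\R^4$ of $2$-planes on which every element of $\Q_B$ vanishes, each element being the pairing of $P_+$ against $P_-$ given by some $B\in\operatorname{Hom}(P_+,P_-)$; this identifies $|\II_p|$ with a subspace $\Pi\subset\operatorname{Hom}(P_+,P_-)\cong\R^{2\times2}$ of dimension $\delta$, and the ruling plane $E_p$ with a null plane for the associated pencil of forms.

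I would first adapt an orthonormal frame to the splitting $P_+\oplus P_-$ and to $E_p$ and write out the Gauss, Codazzi and Ricci equations. The pointwise relations ``$\II$ is off-diagonal for $P_+\oplus P_-$'', ``$\II$ is austere'', and ``$\II\restr{E_p}=0$'' still permit $\delta=3$ at a single point, so the bound $\delta\le2$ is genuinely first order. The mechanism I expect is that prolonging these relations once---imposing the Codazzi equations, which couple the covariant derivatives of $\II$ to the infinitesimal rotation of $E_p$ and of $P_\pm$---over-determines the connection forms and becomes inconsistent unless $\delta\le2$, the would-be $\delta=3$ solutions either collapsing into Type A or failing to integrate. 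This exclusion of $\delta=3$ is the step I expect to be the main obstacle, since it is the only one that is not linear algebra and that must eliminate the \emph{a priori} admissible higher-rank couplings.

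For $\delta=2$ the classification is then read off from the restriction of the determinant form to the plane $\Pi\subset\R^{2\times2}$. If $\det\restr\Pi\equiv0$ then every element of $\Pi$ has rank one and (by the Kronecker description of such planes) the two forms share a common linear factor, so $|\II|$ is simple; Bryant's theorem then gives that $M$ is a generalized helicoid, and since its normal rank, hence codimension, is $2$ it is the generalized helicoid in $\R^6$. If $\det\restr\Pi$ is definite then $\Pi$ contains no rank-one element, the pencil $q_1+\ri q_2$ factors into complex-linear forms, and $|\II_p|$ anticommutes with a complex structure $\rJ$; this is exactly the case in which $M$ is also of Type A, and is covered by Theorem~1. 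If $\det\restr\Pi$ is indefinite then $\Pi$ meets the rank-one cone in two lines $B_i=c_i\langle a_i,\cdot\rangle$ with $a_i\in P_+$ and $c_i\in P_-$, so $q_i=\langle a_i,\cdot\rangle\,\langle c_i,\cdot\rangle$; the tangent bundle and first normal space split as $\operatorname{span}(a_1,c_1)\oplus\operatorname{span}(a_2,c_2)$, with ruling $E=\operatorname{span}(c_1,c_2)$, and on each summand $M$ restricts to a line-ruled minimal surface, that is, a classical helicoid in an $\R^3$. The remaining degenerate position of $\Pi$ is a limit of these.

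This exhibits $M$ as a product of two classical helicoids; the product is orthogonal precisely when $a_1\perp a_2$ and $c_1\perp c_2$, and otherwise the two copies of $\R^3$ sit skew in $\R^6$, which is the non-orthogonal case. The real work therefore concentrates in two places: the first-order exclusion of $\delta=3$ above, and, in the indefinite case, integrating the split tangent distribution---showing each leaf is a genuine helicoid rather than a plane and assembling them into the (possibly skew) product---whereas the simple and definite branches reduce immediately to Bryant's theorem and to Theorem~1 respectively.
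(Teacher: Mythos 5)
There is a genuine gap, and it starts with your description of $\Q_B$. You treat $\Q_B$ as consisting only of forms that vanish on both summands of a splitting $P_+\oplus P_-$, i.e.\ as $\operatorname{Hom}(P_+,P_-)$. But $\Q_B$ is $5$-dimensional: it is spanned by the off-diagonal block matrices \emph{together with the reflection $\rR$ itself} (the $m$-direction in \eqref{defofQB}), and the latter does not vanish on $P_\pm$. Consequently the null plane $E_p$ of $|\II_p|$ need not be an eigenplane of $\rR$, and the correct case division is by $\dim(\rR(E)\cap E)\in\{0,1,2\}$ (and, when it equals $2$, whether $E$ is an eigenspace or a sum of eigenlines). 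Your argument silently restricts to the single case $E=P_+$. This is not a harmless normalization: the ``also of Type A'' alternative in the theorem arises precisely from the case $\rR(E)\cap E=0$ (where the condition $\II|_E=0$ forces $m$ to survive only in combination with the off-diagonal part and forces $\delta\le2$ algebraically), and one of the generalized-helicoid branches arises from $\dim\rR(E)\cap E=1$. None of these is visible from your setup.

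Within the case you do treat, the trichotomy by the signature of $\bdet|_{\cB}$ is the right organizing idea, but the outcomes you assign to the branches are wrong, because each branch requires a first-order (Codazzi/integrability) analysis that you only gesture at. The definite branch does \emph{not} produce the Type A submanifolds: the integrability conditions (Prop.~\ref{cutdownB}) show that a $2$-ruled austere $M$ with $\bdet|_{\cB}$ of rank $2$ exists only when $pq=1$, i.e.\ only in the indefinite case, and even there only for that one normalization --- so ``indefinite $\Rightarrow$ product of helicoids'' is too strong as a pointwise statement, and ``definite $\Rightarrow$ Type A'' is simply false here. The rank-one branch is not ``a limit of these'': it is empty (the integrability conditions force $\ve_6\cdot\ve_6=0$). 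Finally, the exclusion of $\delta=3$ (and $\delta=4$, which your $\operatorname{Hom}(P_+,P_-)$ picture also allows and which you do not mention) is stated as an expectation rather than proved; in the paper this, like everything above, is carried out by explicit computation with the exterior differential system $\I$ of \eqref{idealI}. Your reduction of the simple ($\bdet|_{\cB}\equiv0$) branch to Bryant's theorem and your description of the skew product of helicoids in the surviving branch do match the paper, but the proof as proposed would both miss solutions and assert nonexistent ones.
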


\begin{theorem}\label{Ctheorem}  If $M$ is Type C and $\delta=3$ then $M$ is a generalized helicoid
in $\R^7$ (corresponding to $s=3$ in \eqref{heliparm}).
If $\delta=2$ then $M$ is also of Type B.
\end{theorem}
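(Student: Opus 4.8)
The plan is to treat the two cases by separate reductions: for $\delta=3$ I will show that $M$ is in fact ruled by $3$-planes and invoke Corollary~\ref{threeruled}, while for $\delta=2$ I will show that $M$ is also of Type~B and invoke the Type~B theorem. In both cases I begin from the explicit form of $\Q_C$, which puts $\II$ into ``arrowhead'' form: at each point there is a distinguished unit axis $e_1$ and a rank-$3$ complement $\Delta=e_1^{\perp}\subset TM$, such that $\II|_{\Delta\times\Delta}=0$ and every $\II_\nu$ pairs $e_1$ with $\Delta$. The distribution $\Delta$ is canonical, being the unique $3$-plane field with $\II|_{\Delta\times\Delta}=0$. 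When $\delta=3$ the map $\Delta\to NM$, $v\mapsto\II(e_1,v)$, has rank $3$, and I normalize by its singular value decomposition: choose orthonormal frames $e_2,e_3,e_4$ of $\Delta$ and $e_5,e_6,e_7$ of the image so that $\II(e_1,e_j)=\mu_j\,e_{j+3}$ with each $\mu_j\neq0$ and all remaining components of $\II$ zero. This frame is smooth on the open dense set where the $\mu_j$ are distinct.

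The heart of the $\delta=3$ case is the Codazzi equation, which in the flat ambient $\R^n$ asserts that $\nabla\II$ is totally symmetric in its three arguments. Substituting the normalized arrowhead form and comparing, in the direction $e_5$, the components $(\nabla_{e_k}\II)(e_2,e_j)$ and $(\nabla_{e_2}\II)(e_j,e_k)$ for $j,k\in\{3,4\}$, the first reduces to $-\mu_2\,\omega^{1}_{j}(e_k)$ while the second vanishes because $\II\equiv0$ on $\Delta\times\Delta$. Cycling the roles of the pairs $(e_2,e_5),(e_3,e_6),(e_4,e_7)$ then gives $\omega^{1}_{j}(e_k)=\langle\nabla_{e_k}e_j,e_1\rangle=0$ for all $j,k\in\{2,3,4\}$, i.e.\ $\nabla_\Delta\Delta\subseteq\Delta$. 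Together with $\II|_{\Delta\times\Delta}=0$ this says that the leaves of $\Delta$ are totally geodesic in $\R^n$, hence affine $3$-planes, so $M$ is $3$-ruled; Corollary~\ref{threeruled} then identifies $M$ as a generalized helicoid, and $\delta=3$ forces $s=3$ in \eqref{heliparm}, i.e.\ the helicoid in $\R^7$. (Consistent with this, one checks that when $\delta=3$ the ruling plane satisfies $E_p\subset\Delta_p$; note that the $2$-ruling hypothesis is not actually used here, since Type~C with $\delta=3$ is automatically $3$-ruled.)

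For $\delta=2$ the subspace $|\II_p|$ is a two-dimensional subspace of a conjugate of $\Q_C$, hence consists of arrowhead forms supported on a $2$-plane in $\Delta$ with a common kernel line in $\Delta$. A finite linear-algebra computation, using the explicit forms of $\Q_B$ and $\Q_C$, shows that every such subspace is carried by an element of $O(4)$ into $\Q_B$; thus $M$ is also of Type~B and the conclusion follows from the Type~B theorem. I expect the main obstacle to be the Codazzi bookkeeping in the $\delta=3$ case—keeping track of both the tangential connection forms $\omega^{1}_{j}$ and the normal connection under the singular-value normalization—together with the need to handle the lower-dimensional locus where the $\mu_j$ coincide (and the adapted frame degenerates) by a continuity argument, the distribution $\Delta$ itself remaining smooth there.
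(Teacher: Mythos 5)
Your argument rests on a misreading of $\Q_C$. From \eqref{defofQC}, the restriction of a matrix in $\Q_C$ to the $3$-plane $\Delta=\ve_1^{\perp}$ is
$$\begin{bmatrix} 0 & \lambda_3x_3& \lambda_2x_2 \\ \lambda_3x_3 & 0 &\lambda_1x_1 \\ \lambda_2x_2 & \lambda_1x_1&0 \end{bmatrix},$$
which vanishes only when $\lambda_1=\lambda_2=\lambda_3=0$; the constraint \eqref{lambdarel} admits many nonzero solutions. So the ``arrowhead'' normal form $\II|_{\Delta\times\Delta}=0$ that drives your entire $\delta=3$ argument is not a feature of Type C --- it is essentially the conclusion to be proved. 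In the paper the vanishing of the $\lambda_i$ is extracted from the $2$-ruling hypothesis: when $\delta=3$ the ruling plane $E$ must be orthogonal to $\ve_1$ (the other position of $E$ forces $\delta\le 2$), the algebraic condition \eqref{IIalg} then forces $2\lambda_1\lambda_2\lambda_3x_1x_2x_3=0$, hence (to keep $\delta=3$) one $\lambda$ vanishes, and only after an analysis of the exterior differential system $\Iruled$ showing the remaining parameters are constant does one conclude $\lambda_1=\lambda_2=\lambda_3=0$, so that $\II$ finally takes the form you assumed and $M$ is $3$-ruled. Your parenthetical claim that the $2$-ruling hypothesis is not used is therefore also wrong: a Type C austere $4$-fold with $\delta=3$ and nonzero $\lambda_i$ has no reason to be $3$-ruled. (The final step --- once $\II$ really is arrowhead, $M$ is $3$-ruled and Corollary \ref{threeruled} applies --- is fine and matches the paper.)

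The $\delta=2$ case has a parallel gap. It is not true that every $2$-dimensional subspace of $\Q_C$ annihilating a $2$-plane is carried into $\Q_B$ by an element of $O(4)$; the paper's case division (Case C.1 with one $\lambda$ equal to zero, and Case C.2 subcases (ii) and (iv)) produces candidate subspaces that are not manifestly of Type B, and these are eliminated or normalized only by computing integrability conditions of $\Iruled$ --- for instance, in subcase (iv) a lengthy analysis is needed to force $a_3=\pm1$ and $b_4=\pm1$ before the orthogonal change of basis into the configuration of Prop.~\ref{cutdownB} becomes available, and in Case C.1 with $\delta=2$ and one $\lambda$ zero the EDS analysis shows no submanifolds exist at all, even though the algebra alone does not rule them out. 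A purely finite linear-algebra computation cannot replace these differential arguments, so as written the second half of your proof is also incomplete.
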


We will establish these results using the techniques of moving
frames and exterior differential systems.  After setting up our basic tools
in \S2, we discuss types A, B, and C in sections 3,4,5 respectively.
Finally in \S6 we consider the case of 2-ruled austere submanifolds of normal
rank one, showing that they must lie in $\R^5$, and providing an analogous
twistor construction for this case.

\section{The standard system}\label{stdsection}
Let $\F$ be the semi-orthonormal frame bundle whose fiber at a point $p$ consists of all bases
$(\ve_1,\ve_2, \dots, \ve_n)$ of $T_p\R^{n}$ such that the vectors $(\ve_i)_{i=1\dots 4}$ are orthonormal
and orthogonal to the $(\ve_a)_{a=5\dots n}$. For the rest of the paper, we will use the index ranges
$1\leq i,j,k\leq 4$ and $5\leq a,b,c\leq n$. We choose adapted frames $(\ve_1,\ve_2, \dots, \ve_n)$
along a submanifold $M^4\subset \R^n$ such that $\ve_1(p), \cdots, \ve_4(p)$ are an orthonormal basis
of the tangent space $T_pM$ for each point $p\in M$. Note that we will not necessarily choose the vectors
$\ve_a$ to be orthonormal, as later on we will adapt them accordingly. The frame vectors are regarded as
$\R^{n}$-valued functions on the frame $\F$. We let
\begin{equation}\label{denustreqs}
\begin{aligned}
d{\vp} &= \ve_i \w^i + \ve_a \omega^a,\\
d\ve_i &= \ve_j \omega^j_i + \ve_a \omega^a_i \quad \quad  1\leq i,j\leq 4, \quad  5\leq a,b\leq n,\\
d\ve_a &= \ve_j\omega^j_a + \ve_b \omega^b_a,
\end{aligned}
\end{equation}
define the canonical 1-forms $\omega^i, \omega ^a$ and the connection 1-forms $\omega_i^j, \omega^a_i, \omega^i_a$ and $\omega_b^a$
on $\F$.
This forms span the cotangent space of $\F$ at each point, but they are not linearly independent.
Differentiating the equations  $\ve_i\cdot \ve_j=0$ and $\ve_i\cdot \ve_a=0$ yields  the relations
\begin{equation}\label{symmetries}
\omega_i^j=-\omega _j^i, \qquad  \omega_a^i=-\omega_i^b  g_{ba},
\end{equation}
 where $g_{ab}=\ve_a\cdot \ve_b$. Differentiating the first line of (\ref {denustreqs}), we obtain the structure equations
\begin{equation}\label{canonicalstreqs}
\begin{aligned}
 d\omega^i&=-\omega_j^i\wedge \omega^j+\omega^b_i g_{ba}\wedge\omega^a\\
 d\omega^a&=-\omega^a_i\wedge \omega^i-\omega^a_b\wedge \omega^b
\end{aligned}
\end{equation}
Differentiating the last two equation of   (\ref {denustreqs}) yields
\begin{equation}\label{dconn}
\begin{aligned}
d\omega^i_j &= -\omega^i_k\wedge \omega^k_j +\omega_i^b g_{ba}\wedge\omega_j^a,\\
d\omega^a_i &= -\omega^a_j\wedge \omega^j_i -\omega_b^a \wedge\omega_i^b,\\
d\omega^a_b &= \omega^a_i\wedge \omega^c_i g_{cb} -\omega_a^c \wedge\omega_b^c
\end{aligned}
\end{equation}
along with
\begin{equation}\label{diffg}
d g_{ab}= g_{ac}\omega^c_b+g_{bc}\omega^c_a.
\end{equation}
\indent An adapted frame along a submanifold $M \subset \R^n$ gives a section of $\F\vert_M$.
The following fundamental fact characterizes these sections:
\begin{quote}
A submanifold $\Sigma^4 \subset \F$
is the image of an adapted frame along the submanifold  $M^4 \subset \R^{4+\delta}$ if and only if
$\w^1 \& \w^2 \& \w^3 \& \w^4\restr_\Sigma \ne 0$ and $\omega^a\restr_\Sigma=0.$
\end{quote}
The first of these conditions is a non-degeneracy assumption called the {\em independence condition}.
The second condition implies, by differentiation, that
$\omega^a_i \restr_\Sigma = S^a_{ij} \w^j$
for some functions $S^a_{ij}$.  These functions give the components of
the second fundamental form in this frame, i.e.,
\begin{equation}\label{noose} \II(\ve_i, \ve_j) = S^a_{ij} \ve_a.
\end{equation}

In this paper we are classifying the austere $4$-folds of type A, B and C which are ruled by $2$-planes.
We will now describe a class of exterior differential systems, for later use,
whose integral submanifolds are adapted frames along 2-ruled austere submanifolds.
(Being an integral submanifold of an EDS $\I$ means that the pullback to the submanifold
of any 1-forms in $\I$ is zero).

First, suppose we wish to construct an austere submanifold $M^4$ in $\R^n$ such that at each point $p$,
$|\II_p|$ is conjugate to an austere subspace $\Q_\lambda$ of dimension $\k$ (i.e, $M$ is of type $\Q$) depending on parameters
$\lambda^1, \ldots, \lambda^\ell$ which are allowed to vary along $M$.
Let the symmetric matrices $S^5(\lambda), \ldots, S^{\k+4}(\lambda)$ be a basis for
the subspace $\Q_{\lambda}$ and suppose the parameters are allowed to range over an open set $L \subset \R^\ell$. Let
\begin{equation}\label{defoftai}
\theta_i^a:=\omega^a_i - S^a_{ij}(\lambda)\w^j.
\end{equation}
Then on $\F\times L$ we define the Pfaffian exterior differential  system
$$\H = \{ \omega^a, \theta^a_i\}$$
whose integral submanifolds correspond to austere manifolds of type $\Q_{\lambda}$.
Namely, given any austere manifold $M$ of this kind,
we may construct an adapted frame along $M$ such that
\begin{equation}\label{sis}
\II(\ve_i,\ve_j) =S^a_{ij}(\lambda)\ve_a
\end{equation}
for functions $\lambda^1, \ldots, \lambda^\ell$ on $M$.  Then the
image of the fibered product of the mappings
$p \mapsto (p, \ve_i(p), \ve_a(p))$ and $p\mapsto (\lambda^1(p), \ldots, \lambda^\ell(p))$ will be an integral submanifold of $\H$.
Conversely, any integral submanifold of $\H$ satisfying the independence
condition gives (by projecting onto the first factor in $\F\times L$)
a section of $\F\restr_M$ which is an adapted frame for an austere manifold $M$.

Now consider the additional condition that $M$ is ruled by $k$-dimensional planes.
We will let $E\subset TM$ denote a smooth distribution on $M$ with fiber $E_p$
at $p\in M$.  The following result characterizes
those distributions that are tangent to a ruling of $M$:
\begin{prop} Let $E$ be an arbitrary smooth $k$-plane field on $M$.
Then $M$ is ruled by $k$-planes in $\R^n$ tangent to $E$ if and only if at each $p\in M$
\begin{equation}\label{Druling}
{\mathsf D}_\vv \vw \in E \quad \forall\, \vv\in E_p, \vw \in C^\infty(E),
\end{equation}
where $\mathsf D$ denotes the Euclidean connection in $\R^n$.
\end{prop}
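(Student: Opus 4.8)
The plan is to prove the two implications separately, disposing of the forward direction quickly and concentrating on the converse. Suppose first that $M$ is ruled by $k$-planes tangent to $E$. By definition each ruling is an open piece of an affine $k$-plane $P$, and $E$ restricts along $P$ to the constant linear subspace $W\subset\R^n$ parallel to $P$, with $E_q=T_qP=W$ for every $q\in P$. Given $\vv\in E_p$ and a section $\vw\in C^\infty(E)$, I would evaluate $\mathsf D_\vv\vw$ as the ordinary directional derivative in $\R^n$ of the vector-valued map $\vw$ along a curve through $p$ lying in $P$ with velocity $\vv$. Since $\vw$ takes values in the fixed subspace $W=E_p$ all along $P$, so does this derivative, which is exactly \eqref{Druling}.

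For the converse, assume \eqref{Druling}. First I establish that $E$ is integrable: the Euclidean connection $\mathsf D$ is torsion-free, so for sections $X,Y$ of $E$ we have $[X,Y]=\mathsf D_XY-\mathsf D_YX$, and both terms lie in $E$ by hypothesis; hence $[X,Y]\in C^\infty(E)$ and Frobenius gives a foliation whose tangent distribution is $E$. The crux is then to show that each leaf $P$ is an open subset of an affine $k$-plane. The key observation is that, restricted to $P$, condition \eqref{Druling} says precisely that the second fundamental form of $P$ as a submanifold of $\R^n$ vanishes: for $\vv,\vw$ tangent to $P$, the normal component of $\mathsf D_\vv\vw$ is zero.

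From here the conclusion follows in two steps. I first show the Gauss map $q\mapsto E_q\in\Gr(k,n)$ is constant on $P$: choosing an orthonormal frame $\vw_1,\dots,\vw_k$ of $E$ along $P$ and writing $\mathsf D_\vv\vw_i=\sum_j c_{ij}\vw_j$ (there is no normal part, by the vanishing of the second fundamental form), the coefficients $c_{ij}$ are antisymmetric because the frame is orthonormal, and differentiating the orthogonal projection onto $E_q$ then cancels termwise, so $E_q\equiv W$ is a fixed subspace. With $E_q$ constant, the $W^\perp$-component of $q-p$ has differential equal to the projection of $E_q=W$ onto $W^\perp$, namely zero, so this component is constant and vanishes at $p$; thus $P\subset p+W$, an affine $k$-plane tangent to $E$. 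Assembling the leaves shows $M$ is ruled by $k$-planes tangent to $E$.

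The main obstacle is exactly this constancy-of-the-Gauss-map step, i.e. upgrading the vanishing of the leaf's second fundamental form to its being a genuine affine plane; the integrability and the forward direction are essentially formal. One could instead shorten the argument by identifying \eqref{Druling} with the vanishing of the leaf's second fundamental form and then invoking the classical fact that a connected submanifold of $\R^n$ that is totally geodesic in the ambient space is an open subset of an affine subspace.
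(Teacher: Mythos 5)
Your argument is correct and complete. Note that the paper states this proposition without proof (it is treated as a routine fact and immediately used to derive \eqref{IIalg}), so there is no written argument to compare against; yours supplies exactly the reasoning the authors evidently have in mind. The forward direction is the observation that a section of $E$ restricted to a ruling plane takes values in the fixed subspace $W$ parallel to that plane, so its ambient derivative does too. For the converse, your decomposition is the natural one: torsion-freeness of $\mathsf D$ gives $[X,Y]=\mathsf D_XY-\mathsf D_YX\in C^\infty(E)$, hence integrability; condition \eqref{Druling} restricted to a leaf says the leaf is totally geodesic in $\R^n$; and the orthonormal-frame computation showing the projection operator onto $E_q$ is covariantly constant along the leaf (using the antisymmetry of the connection coefficients $c_{ij}$) correctly upgrades this to constancy of the Gauss map, after which the vanishing of the $W^\perp$-component of $q-p$ pins the leaf inside the affine plane $p+W$. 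As you note, the last two steps could be replaced by citing the classical fact that a connected totally geodesic submanifold of Euclidean space is an open subset of an affine subspace, but the self-contained version you give is fine.
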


Projecting both sides  of \eqref{Druling} into the normal bundle, we get
\begin{equation}\label{IIalg}
\II(\vv,\vw)=0\quad \forall\, \vv,\vw \in E.
\end{equation}
and we see from \eqref{sis} that this puts extra conditions on  $S^a_{ij}$.
In fact, for $k=3$ the condition \eqref{IIalg} implies the following:
\begin{cor}\label{threeruled}  Any 4-dimensional austere submanifold that
is ruled by 3-planes is a generalized helicoid.
\end{cor}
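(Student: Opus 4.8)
The plan is to show that the ruling hypothesis, combined with austerity, forces the second fundamental form to be \emph{simple} in the sense of the theorem of Bryant quoted in the introduction, after which the conclusion is immediate. This is a purely pointwise (in fact, purely algebraic) computation at each $p\in M$, so smoothness of the distribution $E$ plays no role beyond supplying the $3$-plane $E_p$.

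First I would adapt the frame to the ruling. Since $E_p\subset T_pM$ is $3$-dimensional, I choose the orthonormal basis $\ve_1,\ve_2,\ve_3,\ve_4$ of $T_pM$ so that $\ve_1,\ve_2,\ve_3$ span $E_p$ and $\ve_4\perp E_p$. The projected ruling condition \eqref{IIalg} then reads $\II(\ve_i,\ve_j)=0$ for $1\le i,j\le 3$, so by \eqref{noose} we get $S^a_{ij}=0$ for every $a$ and all $i,j\le 3$. Hence each shape matrix $S^a=(S^a_{ij})$ has an ``arrowhead'' form whose only possibly nonzero entries are $S^a_{44}$ together with $S^a_{i4}=S^a_{4i}$ for $i\le 3$. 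Next I would use minimality to eliminate the corner entry: because $|\II_p|$ is an austere subspace, every matrix in it — in particular each $S^a$ — has eigenvalues symmetric about $0$ and is therefore trace-free. But with $S^a_{11}=S^a_{22}=S^a_{33}=0$ the trace of $S^a$ equals $S^a_{44}$, so $S^a_{44}=0$ as well.

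Therefore the only surviving components are $S^a_{i4}$ with $i\le 3$, and, regarded as a quadratic form on $T_pM\cong\R^4$, each basis element equals $2\,\w^4\bigl(S^a_{14}\w^1+S^a_{24}\w^2+S^a_{34}\w^3\bigr)$; taking linear combinations, every form in $|\II_p|$ still carries the single linear factor $\w^4$ dual to $\ve_4$. Thus $|\II|$ is simple at every point, and applying Bryant's characterization shows that $M$ is congruent to a generalized helicoid, as claimed. The argument is short once Bryant's theorem is available, so there is no serious obstacle; the one point deserving care is the passage from austerity to trace-freeness of each individual $S^a$, which is legitimate precisely because the austere condition is imposed on \emph{every} matrix of $|\II_p|$ (equivalently, in every normal direction), and a symmetric matrix with spectrum symmetric about $0$ necessarily has vanishing trace.
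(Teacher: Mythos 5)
Your proof is correct and follows essentially the same route as the paper: adapt the frame so $\ve_1,\ve_2,\ve_3$ span the ruling, use \eqref{IIalg} to confine $|\II|$ to matrices supported on the fourth row and column, conclude simplicity, and invoke Bryant's theorem. Your extra step of killing $S^a_{44}$ via trace-freeness is valid but not needed, since $\w^4$ is already a common linear factor of every such quadratic form even when the $(4,4)$ entry is nonzero.
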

\begin{proof}  If we choose an orthonormal frame in which ${\ve_1,\ve_2,\ve_3}$ spans the ruling,
then \eqref{IIalg} implies that $|\II |$ lies in the space of matrices with nonzero entries
only in the fourth row and column.  Thus, $|\II |$ is simple, and the result follows by Bryant's
theorem quoted in \S1.
\end{proof}

From now on we will consider submanifolds ruled by 2-planes.  We will ensure that
the Pfaffian system $\H$ encodes the condition \eqref{IIalg} by assuming that $E$ has
a certain basis with respect to the orthonormal frame on $M$, and choosing
the space $\Q_\lambda$ so that this condition holds for those basis vectors.
However, encoding the {\em tangential} part of \eqref{Druling} requires additional 1-form generators.

Suppose that $\vv_1,\vv_2$ are vector fields spanning $E$, $\phi_1,\phi_2$
are 1-forms that annihilate $E$, and $\vw_1, \vw_2$ are vector fields
that span the orthogonal complement of $E$ at each point in an open set $U\subset M$.
Then the tangential part of \eqref{Druling} is equivalent to
\begin{equation}\label{Pruling}
\vw_i\cdot \nabla \vv_j \equiv 0 \mod \phi_1, \phi_2
\end{equation}
for all $1\le i,j \le 2$.  (Here, $\nabla$ denotes the Riemannian connection of $M$,
so that $\nabla \vv_i$ is a $(1,1)$ tensor on $U$.)
We may encode this condition by defining 1-forms
\begin{equation}\label{psigeneral}
\psi^i_j := \vw_i \cdot \nabla \vv_j -p^i_{jk} \phi_k,
\end{equation}
for arbitrary coefficients $p^i_{jk}$.  (We will give the specific
forms for the $\psi^i_j$ in later sections.)

Thus, we define the {\em ruled austere system} $\I$ as the Pfaffian system
 generated by the forms $\omega^a$ and
$\theta^a_i=\omega^a_i - S^a_{ij}(\lambda)\w^j$ from $\H$, plus the extra
1-forms $\psi^i_j$.
The integral submanifolds of this differential ideal
\begin{equation}\label{idealI}
 \I=\{ \omega^a, \theta^a_i, \psi_j^i\}
\end{equation}
correspond to 2-ruled austere manifolds of type $\Q_\lambda$.
The system $\I$ is in general defined on the manifold $\F\times L \times \R^8$, due to
the introduction of the $p^i_{jk}$ as new variables.  However, for specific
types we will restrict to submanifolds of $\F\times L \times \R^8$ on which certain necessary integrability
conditions hold.


\section{Type A}\label{typeApart}
Recall that an austere 4-fold of Type A carries a complex structure $\rJ$.  We take orthonormal
frames on $M$ with respect to which the complex structure is represented by
$$J = \begin{bmatrix} 0 & -1 & 0 & 0 \\ 1 & 0 & 0 & 0 \\ 0 & 0 & 0 & -1 \\ 0 & 0 & 1 & 0\end{bmatrix}.$$
The maximal austere subspace $\Q_A$ consists of symmetric matrices which anticommute with
$J$.  This subspace is preserved by the action of $U(2)\subset SO(4)$, the group of orthogonal
matrices commuting with $J$.
In Corollary 7 of \cite{ayeaye} we showed that any austere 4-fold of Type A
with $\delta \ge 2$ is K\"ahler with respect to $\rJ$.  Consequently, $\rJ$ is parallel along $M$
and the connection forms must satisfy
\begin{equation}\label{Kahlerconn}
\w^4_2 = \w^3_1, \qquad \w^4_1 = -\w^3_2.
\end{equation}

Suppose a type A austere submanifold is 2-ruled.
Then $E + \rJ(E)$ is $\rJ$-invariant, so is either 4- or 2-dimensional.  We treat these
cases separately.

\subsection*{Case A.1: $E=\rJ(E)$}  Let $M$ be a 2-ruled austere submanifold of this type.
Using the $U(2)$ symmetry, we can choose near each point a semiorthonormal frame with respect to
which $|\II|$ lies in $\Q_A$ and $E$ is spanned by $\ve_1, \ve_2$.
Thus, $|\II|$ must lie inside the subspace
\begin{equation}\label{arrdef}\cR = \left\{
\begin{bmatrix} 0 & 0 & a_1  & b_1  \\ 0 & 0 & b_1 & -a_1 \\ a_1 & b_1 & a_2 & b_2
 \\ b_1 & -a_1 & b_2 &-a_2  \end{bmatrix}
\right\},\end{equation}
consisting of matrices in $\Q_A$ satisfying the algebraic condition \eqref{IIalg}.
(Thus, $\delta \le 4$.)
Moreover, the ruled condition \eqref{Pruling}, together with \eqref{Kahlerconn},
implies that
\begin{equation}\label{uvxycof}
\w^3_1 =\w^4_2= u\w^3+v\w^4, \qquad \w^3_2 = -\w^4_1 = x\w^3 + y\w^4,
\end{equation}
for some functions $u,v,x,y$.

\begin{prop}\label{Grassprop} The map $\gamma_E:M \to \Gr(2,n)$, taking $p\in M$ to the
subspace parallel to $E_p$, is holomorphic.
\end{prop}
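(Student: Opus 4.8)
The plan is to write $\gamma_E$ in homogeneous coordinates and check the Cauchy--Riemann equations directly from the structure equations \eqref{denustreqs}. First I would realize $\Gr(2,n)$, oriented by $\rJ$, as the complex quadric $\{[\zeta]\in\CP^{n-1}:\zeta\cdot\zeta=0\}$, the oriented $2$-plane spanned by an orthonormal pair $(\ve_1,\ve_2)$ corresponding to the isotropic line $[\ve_1-\ri\ve_2]$. Since $E$ is $\rJ$-invariant with $\rJ\ve_1=\ve_2$, the complex structure orients $E$, and $\gamma_E$ is represented by the $\C^n$-valued function $Z=\ve_1-\ri\ve_2$ on $M$. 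With respect to the $(1,0)$-coframe $\zeta^1=\w^1+\ri\w^2$, $\zeta^2=\w^3+\ri\w^4$ of the complex surface $(M,\rJ)$, holomorphicity of $\gamma_E$ is the statement that the $(0,1)$-part of $dZ$ is a multiple of $Z$; equivalently, modulo $Z$, $dZ$ involves only $\zeta^1,\zeta^2$ and not their conjugates.

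Next I would compute $dZ=d\ve_1-\ri\,d\ve_2$ from \eqref{denustreqs} and split it into three groups. By the antisymmetry $\w^i_j=-\w^j_i$ the $\ve_1,\ve_2$ terms collapse to $\ri Z\,\w^2_1$, a multiple of $Z$ that may be discarded. Reading the entries of $\cR$ off \eqref{arrdef} gives $\w^a_1=a_1\w^3+b_1\w^4$ and $\w^a_2=b_1\w^3-a_1\w^4$ for functions $a_1,b_1$ attached to each normal index, so the normal terms combine into $\ve_a(a_1-\ri b_1)\zeta^2$, which is already of type $(1,0)$. Finally, the K\"ahler relations \eqref{Kahlerconn} combine the $\ve_3,\ve_4$ terms into $W\beta$, where $W=\ve_3-\ri\ve_4$ and $\beta:=\w^3_1-\ri\w^3_2$. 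Thus everything reduces to showing that the single $1$-form $\beta$ is of type $(1,0)$ on $M$.

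By \eqref{uvxycof}, $\beta=(u-\ri x)\w^3+(v-\ri y)\w^4$, whose $\bar\zeta^2$-component is $\tfrac12[(u+y)+\ri(v-x)]$; hence $\beta$ is of type $(1,0)$ precisely when $u=-y$ and $v=x$. Proving these two relations is the crux and, I expect, the main obstacle. They do not follow from the K\"ahler condition alone --- that condition is exactly \eqref{Kahlerconn}, which leaves $u,v,x,y$ unconstrained --- nor from the fact that the rulings are totally geodesic complex curves, which holds automatically; they are genuinely extrinsic. I would derive them by differentiating the relations $\w^a_i=S^a_{ij}\w^j$ (equivalently $\theta^a_i=0$) along an integral manifold and substituting \eqref{Kahlerconn}, \eqref{uvxycof}, and the form \eqref{arrdef} of $\cR$; this is the Gauss--Codazzi system of $M$. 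The delicate point is to combine its components so that the undetermined terms $\w^2_1$, $\w^3_4$, $\w^a_b$ and the derivatives of the $\cR$-parameters cancel, leaving exactly $u+y=0$ and $v=x$. Organizing that cancellation, rather than the reduction carried out above, is where the real work lies.
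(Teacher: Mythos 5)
Your reduction of the problem is correct and is in substance the same as the paper's: both arguments come down to establishing the two scalar identities $u=-y$ and $v=x$ (equation \eqref{A1firstcond} in the paper), after which \eqref{uvxycof} immediately makes the remaining connection forms of type $(1,0)$. Your packaging via $Z=\ve_1-\ri\ve_2$ and the quadric model is a harmless variant of the paper's use of the pulled-back $(1,0)$-forms $\w^a_1-\ri\w^a_2$ on $\Gr(2,n)$, and your observation that the normal components satisfy $\w^a_1-\ri\w^a_2=(S^a_{13}-\ri S^a_{14})\zeta$, hence are automatically $(1,0)$ because of the shape of $\cR$ in \eqref{arrdef}, is exactly the paper's first step.

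The gap is that you stop at the crux. You correctly locate the source of $u=-y$, $v=x$ in the differentiated Codazzi relations $\theta^a_i=0$, and you correctly anticipate the difficulty (the undetermined terms $\w^2_1$, $\w^4_3$, $\w^a_b$ and the derivatives of the $\cR$-parameters), but you do not exhibit the combination that isolates the two relations, and without it the proof is incomplete. The paper's device is worth recording: first use the residual $U(1)\times U(1)$ symmetry of $\cR$ to adapt the frame so that some element $S^5$ of $|\II|$ has the normal form \eqref{Sform} with $(p,q)\ne(0,0)$; then wedge the complexified Codazzi $2$-forms with $\zeta$. All of the troublesome terms in $-d\theta^5_i\equiv(dS^5_{ij}-S^5_{ik}\w^k_j-S^5_{kj}\w^k_i+S^b_{ij}\w^5_b)\wedge\w^j$ contribute, in the combinations $-d\theta^5_1+\ri\,d\theta^5_2$ and $-d\theta^5_3+\ri\,d\theta^5_4$, only terms proportional to $\zeta$, so wedging with $\zeta=\w^3+\ri\w^4$ annihilates them and leaves $2p\bigl(x-v+\ri(y+u)\bigr)\,\tau\wedge\w^3\wedge\w^4$ and $2q\bigl(u+y+\ri(v-x)\bigr)\,\tau\wedge\w^3\wedge\w^4$ respectively. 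Since the independence condition makes $\tau\wedge\w^3\wedge\w^4$ nonvanishing, whichever of $p,q$ is nonzero on a given open set forces \eqref{A1firstcond} there. This is a short computation once set up, but it is the entire content of the proposition beyond the reduction you carried out, so as written your argument does not yet prove the statement.
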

\begin{proof}Define a map $\pi:\F \to \Gr(2,n)$ that takes
$(p,\ve_1, \ldots, \ve_n)$ to $\{\ve_1, \ve_2\}$; then
$\gamma_E$ is locally the composition of $\pi$ with the lift $f:M \to \F$
provided by the frame.
The pullbacks under $\pi$ of the (1,0)-forms on $\Gr(2,n)$ are spanned by
$\w^3_1 -\ri \w^3_2, \ldots, \w^n_1 -\ri \w^n_2$
(see \cite{KN}, Chapter XI, Example 10.6).  Meanwhile, a basis for the $(1,0)$-forms
on $M$ is given by
\begin{equation}\label{firsttauzeta}
\tau := \w^1 + \ri \w^2,\qquad \zeta := \w^3+ \ri \w^4.
\end{equation}
For $a>4$, the connection forms for the framing satisfy $\w^a_i = S^a_{ij} \w^j$
for some matrices $S^a$ taking value in $\cR$.  We then compute that
$\w^a_i - \ri \w^a_2 = (S^a_{13}-\ri S^a_{14})\zeta$
along the lift of $M$ into $\F$.  Thus, $f^*(\w^a_1 - \ri \w^a_2)$ is a multiple of
$\zeta$ for each $a>4$.  We must also show that
this is also true for the pullbacks of $\w^3_1 -\ri \w^3_2$ and $\w^4_1 -\ri\w^4_2$.

The subgroup $U(1) \times U(1)$ preserves $\cR$.  Given any nontrivial
subspace of $\cR$, we may use this symmetry to arrange that
the subspace contains a matrix of the form
\begin{equation}\label{Sform}
\begin{bmatrix} 0 & 0 & p & 0 \\ 0 & 0 & 0 & -p \\ p & 0 & 0 & q \\ 0 & -p & q & 0\end{bmatrix}
\end{equation}
for $p,q$ not both zero.  Thus, along $M$ we can adapt a semiorthonormal frame
such that, say, $S^5$ has the form \eqref{Sform}.

The 1-forms $\theta^a_i$, defined by \eqref{defoftai}, vanish on $\Sigma=f(M)$.
Computing their exterior derivatives gives
$$-d\theta^a_i \equiv (dS^a_{ij}-S^a_{ik} \w^k_j -S^a_{kj}\w^k_i +S^b_{ij}\w^a_b)\& \w^j
\quad\mod \w^a, \theta^a_i.$$
In particular, using \eqref{uvxycof} we have
$$0=(-d\theta^5_1+\ri d\theta^5_2) \& \zeta = 2p(x-v+\ri(y+u))\tau \& \w^3 \& \w^4.$$
The 3-form $\tau \& \w^3 \& \w^4$ is nonvanishing since $\Sigma$ satisfies the
independence condition.  Thus, wherever $p\ne0$ we have
\begin{equation}\label{A1firstcond}
v=x, \qquad u = -y.
\end{equation}
Even if $p$ is identically zero on an open set in $\Sigma$, we can compute
$$0=(-d\theta^5_3 + \ri d\theta^5_4)\& \zeta =2q(u+y+\ri (v-x))\tau \& \w^3 \& \w^4,$$
again showing that \eqref{A1firstcond} must hold along $\Sigma$.  Substituting
\eqref{A1firstcond} into \eqref{uvxycof} shows that
$$\w^3_1 -\ri \w^3_2 = -\ri(x-\ri y)\zeta, \qquad \w^4_1 -\ri \w^4_2 =-(x-\ri y)\zeta.$$
Thus, $\gamma_E$ is holomorphic.
\end{proof}

In the remainder of this subsection, we will analyze the solutions of $\Iruled$
for each possible value of $\delta$.
For the `extra' 1-form generators defined
by \eqref{psigeneral}, we may take $\phi_1=\w^3$ and $\phi_2=\w^4$.  In addition,
because of the integrability condition \eqref{A1firstcond} holds on all solutions,
we will use
\begin{equation}\label{A1extra}
\begin{aligned}
\psi^1_1 &= \w^3_1 +y \w^3 - x \w^4, \quad & \psi^1_2 &= \w^3_2 - x\w^3 - y\w^4,\\
\psi^2_1 &= \w^4_1 + x\w^3 + y\w^4, \quad & \psi^2_2 &=\w^4_2 +y \w^3 - x \w^4.
\end{aligned}
\end{equation}

\medskip
(a) $\boldsymbol{\delta=2}$.
The analysis in section 5 of \cite{ayeaye} shows that when $M$ is
an austere 4-fold of Type A, with $\delta=2$, then $|\II|$ has a two-dimensional
nullspace $E$ only if either the Gauss map is degenerate, or $M$ lies in $\R^6$ and belongs
to the set of ruled submanifolds of ``type 2.b'' in the terminology of \cite{ayeaye}.
In the first case, $M$ belongs to the class of {\em elliptic} austere submanifolds investigated
by Dajczer and Florit \cite{DF}.

In the second case, we may choose an orthonormal
frame $(\ve_1, \ldots, \ve_6)$ along $M$ such that
$$\ve_5 \cdot \II = \begin{bmatrix} 0 & 0 &r & 0 \\ 0 & 0 & 0 & -r \\ r& 0 & 0 & b\\ 0 & -r & b &0\end{bmatrix},\quad
\ve_6 \cdot \II = \begin{bmatrix} 0 & 0& 0 & r \\ 0 & 0 &r & 0\\ 0 & r & a & 0  \\ r & 0 & 0 & -a\end{bmatrix}$$
for some functions $a,b,r$ on $M$.
By regarding the frame vectors as columns in a matrix, we define a mapping
$\ghat: M \to SO(6)/U(3)$ which is rank 2 and holomorphic.  (See Theorem 15 in \cite{ayeaye}; note also
that $V=SO(6)/U(3)$ is biholomorphic to $\CP^3$.)
Conversely, given a generic holomorphic curve $\CC$ in the twistor space $V$
we may, by solving a first-order system of PDE, construct an austere submanifold $M\subset \R^6$
of this type, such that $\ghat(M)=\CC$
(see Theorem 16 in \cite{ayeaye} for more details).

\medskip
(b) $\boldsymbol{\delta=3}$.
Using the subgroup $U(1)\times U(1)$ preserving $\cR$, we can choose a semiorthonormal
frame along $M$ so that \eqref{sis} holds with
$$
S^5 = \begin{bmatrix} 0 & 0 & p & 0 \\ 0 & 0 & 0 & -p \\ p & 0 & q & 0 \\ 0 & -p & 0 &-q \end{bmatrix},\
S^6 = \begin{bmatrix} 0 & 0 & 0 & 1 \\ 0 & 0 & 1 & 0\\ 0 & 1 & 0 & 0\\ 1 & 0 & 0 & 0\end{bmatrix},\
S^7 = \begin{bmatrix} 0 & 0 & 0 & 0  \\  0 & 0 & 0 & 0  \\ 0 & 0 & 0 & 1\\ 0 & 0 & 1 & 0 \end{bmatrix}\
$$
for some functions $p,q$ on $M$,
and $S^a=0$ for $a > 7$.  In fact, one can calculate that the prolongation of $|\II|$ has dimension zero;
thus, without loss of generality we can assume that $\delta$ is the
effective codimension, and $M$ lies in $\R^7$ (see Prop. 3 in \cite{ayeaye}).

We analyze the structure equations of the standard system $\Iruled$ with
`extra' 1-forms given by \eqref{A1extra}.  In particular, we compute
$$d(\psi^1_1 - \psi^2_2) \equiv 2 p g_{56}(\w^1\&\w^4 + \w^2\&\w^3)+ (g_{66}-p^2 g_{55})(\w^2 \& \w^4 - \w^1 \& \w^3)$$
modulo $\theta^a, \theta^a_i, \psi^i_j$.
Thus, integral elements satisfying the independence condition
exist only at points where  $g_{56}=0$ and $g_{66}=p^2 g_{55}$ (with $p$ nonzero
since the matrix $g_{ab}$ must be positive definite).
When we restrict to the submanifold where these integrability conditions hold,
the system $\Iruled$ becomes involutive, with last nonzero Cartan character $s_1=12$.
In what follows, we will show how such submanifolds can be constructed by beginning
with a holomorphic curve in a homogeneous complex manifold.

First, note that the integrability conditions imply that we may construct an orthonormal
frame $(\ve_1, \ldots, \ve_4, \et_5, \et_6, \et_7)$ along $M$ such that $\et_5$ and
$\et_6$ are multiples of $\ve_5$ and $\ve_6$, respectively, and with respect to this frame
the second fundamental form is represented by
\[\label{Stildes}
\tilde S^5 = \begin{bmatrix} 0 & 0 & p & 0 \\ 0 & 0 & 0 & -p \\ p & 0 & q & a \\ 0 & -p & a &-q \end{bmatrix},\
\tilde S^6 = \begin{bmatrix} 0 & 0 & 0 & p \\ 0 & 0 & p & 0\\ 0 & p & 0 & b\\ p & 0 & b & 0\end{bmatrix},\
\tilde S^7 = \begin{bmatrix} 0 & 0 & 0 & 0  \\  0 & 0 & 0 & 0  \\ 0 & 0 & 0 & c\\ 0 & 0 & c & 0 \end{bmatrix}\
\]
for some functions $a,b,c,p,q$ with $c,p\ne 0$.  (Note that $p,q$ have been scaled
up by the length of the original vector $\ve_5$.)  For convenience, we drop the tildes from now on.
Let $\Ion$ denote the Pfaffian system on $\Fon \times L$ for which this orthonormal
gives an integral submanifold; that is, $\Ion$ is generated by $\w^a$ for $a=5,6,7$, $\theta^a_i:=\w^a_i-S^a_{ij}\w^j$
with $S^a$ given by \eqref{Stildes}, and $\psi^i_j$.  (Here, $L$ is the parameter
space with coordinates $a,b,c,p,q,x,y$.)

At each point $p\in M$, let $F_p$ be the 6-dimensional oriented subspace in $\R^7$
spanned by the oriented basis $\ve_1, \ldots, \ve_6$, and let $\Jhat_p$ be
the complex structure on $F_p$ taking $\ve_1$ to $\ve_2$, $\ve_3$ to $\ve_4$
and $\ve_5$ to $\ve_6$.  (This extends the complex structure on $T_p M$.)
Define $V$ as the space of triples $(E,F,\Jhat)$ where $E,F$ are
oriented subspaces of $\R^7$ and $\Jhat$ is an orthogonal complex
structure on $F$ preserving $E$.
(By adapting orthonormal frames to each triple $(E,F,\Jhat)$, we identify $V$ with the homogeneous space $SO(7)/SO(2)\times U(2)$.)
Then we define a smooth mapping $\Gamma:M\to V$ sending
$p$ to $(E_p, F_p, \Jhat_p)$.

We will give $V$ a complex structure so that $\Gamma$ is holomorphic.
To this end, define the following complex-valued 1-forms on $SO(7)$:
$$\eta^p :=\w^p_1-\ri \w^p_2, \quad \nu^p := \w^p_3 -\ri \w^p_4 \, \quad \sigma :=\w^7_5 - \ri \w^7_6.$$
Then
\begin{equation}\label{Vsemibasic}
\eta^3, \ldots, \eta^7, \nu^5-\ri \nu^6, \nu^7, \sigma
\end{equation}
are semibasic for the quotient map $\pi:SO(7)\to V$, and in fact these
span the pullback to $SO(7)$ of the bundle of $(1,0)$-forms for the complex structure on $V$.
To see that $\Gamma$ is holomorphic, we express the generators of $\Ion$ in terms
of the complex 1-forms:
\begin{equation}\label{Ioncomplex}
\begin{aligned}
\psi^1_1 -\ri \psi^1_2 &= \eta^3 + (y+\ri x)\zeta,  & \psi^2_1 -\ri \psi^2_2 &= \eta^4 + (x-\ri y)\zeta,\\
\theta^5_1 -\ri \theta^5_2 &= \eta^5 -p\zeta, & 	\theta^5_3 -\ri \theta^5_4 &= \nu^5 -p\tau +(\ri a-q)\zeta,\\
\theta^6_1 -\ri \theta^6_2 &= \eta^6 +\ri p \zeta, & \theta^6_3 -\ri \theta^6_4 &=\nu^6 +\ri p\tau +\ri b \zeta,\\
\theta^7_1 -\ri \theta^7_2 &= \eta^7, &  \theta^7_3 -\ri \theta^7_4 &= \nu^7 +\ri c \zeta,\\
\end{aligned}
\end{equation}
where $\tau, \zeta$ are as in \eqref{firsttauzeta}.  It follows that the pullbacks of the $(1,0)$-forms on $V$, when
restricted to integral submanifolds of $\Ion$, are multiples of the $(1,0)$-form $\zeta$ on $M$.

The image $\CC$ of $\Gamma:M \to V$ is not a generic holomorphic curve; in fact, it is an integral of a
well-defined complex rank 3 Pfaffian system on $V$.  To see this, first note that
 among the semibasic forms listed in \eqref{Vsemibasic}, the following
subsystem vanishes on $M$:
$$\Khat = \{ \eta^3-\ri \eta^4, \eta^5-\ri \eta^6, \eta^7\}.$$
Then one calculates that
\begin{align*}
d(\eta^3-\ri \eta^4) &\equiv (\nu^5 - \ri \nu^6) \& \eta^5, \\
d(\eta^5-\ri \eta^6) &\equiv -(\nu^5 - \ri \nu^6)\& \eta^3, \\
d\eta^7 &\equiv \eta^3 \& \nu^7 + \eta^5 \& \sigma
\end{align*}
modulo the 1-forms of $\Khat$.  The fact that the right-hand sides are pure wedge products
of semibasic forms for $\pi$ indicates that $\Khat$ is the pullback of a well-defined
Pfaffian system $\K$ on $V$ (see \cite{CFB}, Prop. 6.1.19).

\begin{remark}\label{charK} The Pfaffian system $\K$ may be characterized in two ways.

First, note that $V$ has the structure of a double fibration over more familiar complex homogeneous
spaces, the mappings $\rho_1: V \to \Gr(2,\R^7)$ and $\rho_2:V\to SO(7)/U(3)$ being
induced by the inclusions $SO(2)\times U(2) \subset SO(2)\times SO(5)$ and
$SO(2)\times U(2) \subset U(3)$ respectively.
Then $\K$ is spanned by the intersection of the pullbacks of
$(1,0)$-forms via $\rho_1$ and the pullbacks of $(1,0)$-forms via $\rho_2$.

Next, recall that the tangent space to $\Gr(2,\R^n)$ at $E$ is naturally
identified with $E^* \otimes \R^n/E$.  Thus, for any tangent vector $\vv\in T_p M$,
$\gamma_* \vv$ is an element of $(E_p)^* \otimes \R^7/E_p$.  However, the form of
the matrices $S^a$ in this case, and the vanishing of the 1-forms $\psi^i_j$ on $M$,
imply that $\Gamma: M \to V$ satisfies the following

\medskip
\begin{quote}
{\sl Contact Condition}:  For any $\vv\in T_pM$, $\Gamma_*(\vv)$ takes value in $(E_p)^* \otimes F_p/E_p$
and its value, as a mapping, is complex-linear with respect to $\Jhat_p$.
\end{quote}

\medskip
\noindent
In fact, any holomorphic mapping into $V$ is an integral of $\K$ if and only if it satisfies
this contact condition.
\end{remark}

\begin{theorem}\label{AR7twist}  Let $\CC$ be a holomorphic curve in $V$ which is an integral of $\K$, and
has a nonsingular projection onto $\Gr(2,\R^7)$.  Given any point $q\in \CC$ there is an
open neighborhood $U\subset \CC$ containing $q$ and an austere $M^4\subset \R^7$
which is Type A, 2-ruled with $\rJ(E)=E$, such that $\Gamma(M) = U$.
\end{theorem}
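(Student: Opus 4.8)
The plan is to invert the construction of $\Gamma$: from the curve $\CC$ I would rebuild the frame and position data over it and integrate the structure equations of \S\ref{stdsection} to recover the immersion $\vp\colon M\to\R^7$. Concretely, let $B\subset\Fon\times L$ be the locus of points whose rotational ($SO(7)$) part projects into $\CC$ under $\pi\colon SO(7)\to V$, cut down further by the integrability conditions (i.e.\ the normalizations built into the matrices $S^a$ of \eqref{Stildes}), and restrict the standard system $\Ion$ to $B$. Since $\CC$ is holomorphic, every $(1,0)$-form on $V$ pulls back to a multiple of a single $(1,0)$-form $dw$ on $\CC$, and since $\CC$ is an integral of $\K$ the three forms of $\Khat$ vanish on $B$. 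The goal is then to show that $\Ion\restr_B$ admits, through each point lying over $q$, a $4$-dimensional integral manifold satisfying the independence condition and projecting onto a neighborhood of $q$ in $\CC$.

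The structural observation that drives the argument is that, in the expressions \eqref{Ioncomplex}, every $V$-semibasic form $\eta^p,\nu^p,\sigma$ is coupled to the tangent forms only through the single $(1,0)$-form $\zeta$, never through the ruling form $\tau$. Consequently, once $\CC$ is fixed, requiring the generators $\theta^a_i$ and $\psi^i_j$ to vanish forces $\zeta$ to be proportional to $dw$ (using $p\ne0$) and pins down all the normal and tangential connection forms along $\CC$, while leaving $\tau$ free. This suggests integrating in two stages: first produce a directrix surface $N_0\subset B$ lying over $\CC$ by integrating the frame and position in the $\zeta$-directions (that is, along $w,\bar w$), and then sweep out the flat $2$-planes in the $\tau$-direction. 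Because the rulings are affine (the vanishing of $\II$ on $E\times E$), the second stage is the explicit linear flow $n\mapsto n+t^1\ve_1+t^2\ve_2$, so the whole reconstruction reduces to the integrability of the first stage.

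For the first stage I would appeal to the Frobenius theorem (and, the data being real-analytic, Cartan--K\"ahler as a fallback): one must check that the differentials of the restricted generators lie in the algebraic ideal they generate together with multiples of $dw$. The computation is organized by the structure equations displayed just before the statement,
\[
d(\eta^3-\ri\eta^4)\equiv(\nu^5-\ri\nu^6)\&\eta^5,\quad d(\eta^5-\ri\eta^6)\equiv-(\nu^5-\ri\nu^6)\&\eta^3,\quad d\eta^7\equiv\eta^3\&\nu^7+\eta^5\&\sigma
\]
modulo $\Khat$, which are exactly the conditions guaranteeing that $\K$ descends to $V$ and that its integral curves carry the compatible first-order data recorded by the contact condition of Remark~\ref{charK}. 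Verifying that the remaining generators $\theta^a_i,\psi^i_j$ then close up will determine the parameters $a,b,c,p,q,x,y$ as functions along $\CC$ and fix the residual gauge in $SO(2)\times U(2)$, after which Frobenius yields the integral manifold $N$ through any prescribed initial frame over $q$.

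Finally I would set $M=\vp(N)$, the image under the basepoint projection $\Fon\times L\to\R^7$. The nonsingular projection of $\CC$ onto $\Gr(2,\R^7)$ guarantees that $\zeta$ does not vanish, so $\w^1\&\w^2\&\w^3\&\w^4\ne0$ on $N$ and $M$ is a genuine immersed $4$-fold; by construction its second fundamental form is given by the matrices $S^a$ of \eqref{Stildes}, whence $M$ is austere, of Type~A with $\rJ(E)=E$, and $2$-ruled, and $\Gamma(M)$ is precisely a neighborhood $U$ of $q$ in $\CC$. The hard part will be the integrability verification of the third paragraph: one must confirm that restricting to frames over the $\K$-integral $\CC$, together with the normalizations on $L$, annihilates all of the torsion of $\Ion$, so that the vanishing of the higher Cartan characters ($s_2=s_3=s_4=0$, as $s_1=12$ is the last nonzero one) manifests as rigid, unique extendability of the directrix data to the full ruled $4$-fold.
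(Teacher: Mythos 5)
Your overall strategy --- pull the frame bundle back over $\CC$, restrict $\Ion$ to the frames lying over the curve, and produce the integral $4$-manifold from the reduced system --- is the same as the paper's. But the key integrability step is wrong in a way that changes the nature of the result. You assert that once $\CC$ is fixed the generators of $\Ion$ ``pin down all the normal and tangential connection forms along $\CC$,'' that the first stage closes up under Frobenius, and that the reconstruction is ``rigid, unique.'' It is not. After restricting to the locus where $f_4/f_2$ is purely imaginary and imposing the relations \eqref{wprimeq} (which determine $x,y,a,c,q$ in terms of $b,p$ and the $f_j$), the functions $b$ and $p$ remain \emph{free}, and the residual Pfaffian system $\{\beta_1,\beta_2\}$ with $\beta_1=p\zeta-f_2\,dz$, $\beta_2=\nu^5-p\tau-b\zeta-f_3\,dz$ has structure equations $d\beta_1\equiv\pi_1\&\zeta$, $d\beta_2\equiv-\pi_1\&\tau-\pi_2\&\zeta$ in which $\pi_1,\pi_2$ involve $dp$ and $db$. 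These derivatives are not determined algebraically by the ideal, so the system is not Frobenius; it is involutive with last nonzero Cartan character $s_1=4$ (on the open set where $b+p(f_3/f_2)\ne0$), and existence requires the Cartan--K\"ahler theorem in the real-analytic category. In particular, austere $4$-folds over a fixed $\CC$ depend on $4$ functions of one variable, so the ``unique extendability'' you claim in the last paragraph is false; your reading of $s_1=12$ for the unrestricted system as implying rigidity once $\CC$ is chosen conflates the function count of the full moduli with the fibre of $\Gamma$ over a fixed curve.

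A secondary gap: you never carry out the normalizations that make the reduced system well posed. One must use the $SO(2)\times U(2)$ gauge to restrict to the codimension-one submanifold $N'\subset N$ where $f_2\ne0$ and $f_4/f_2$ is purely imaginary (otherwise the equation $\nu^7+\ri c\zeta=f_4\,dz+\ri c\zeta=0$ is incompatible with $c$ real), and then impose \eqref{wprimeq} so that the generators in \eqref{Ioncomplex} become linearly dependent and $\zeta$ can be nonzero on solutions. Without these reductions the restricted ideal forces $\zeta=0$ and the independence condition fails. Your two-stage ``directrix plus linear flow'' picture is also not as clean as stated: the parameters $p,b$ (hence the second fundamental form data) vary along the rulings, coupled through the $\pi_1\&\tau$ term above, so the problem does not literally decouple into an ODE/Frobenius problem over $\CC$ followed by an affine sweep.
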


\begin{proof}Let $N\subset SO(7)$ be the inverse image of $\CC$.  Let $z$ be a local
holomorphic coordinate defined on open set $U \subset \CC$.  Then there are functions
$f_1, \ldots, f_5$ on $\rho_1^{-1}(U) \subset N$ such that
$$\eta^3 = f_1 dz, \quad \eta^5 = f_2 dz, \quad \nu^5-\ri \nu^6 = f_3 dz, \quad
\nu^7 = f_4 dz, \quad \sigma = f_5 dz.$$
(By hypothesis, $f_1$ and $f_2$ are never both zero.) Comparing these with
\eqref{Ioncomplex}, we see that to construct an integral manifold of $\Ion$ we
will need $f_2$ to be nonzero and $f_4/f_2$ purely imaginary.  By computing the action
of $SO(2)\times U(2)$ on the fiber of $\rho_1$, we see that there is a
smooth, codimension-one submanifold $N'\subset N$ where these conditions hold.

Let $W \subset \Fon \times L$ be the inverse image of $N'$ under the
projection to $SO(7)$.  The restriction of $\Ion$ to $W$ is spanned by
the $\w^a$ and the forms on the right-hand sides in \eqref{Ioncomplex}.  However, these
forms are now linearly dependent, because $\eta^4=\ri \eta^3$, $\eta^6=\ri \eta^5$ and $\eta^7=0$ on $W$, and the rest
satisfy the following relationships:
\begin{align*}
\eta^3 + (y+\ri x)\zeta &= f_1 dz + (y+\ri x)\zeta\\
\eta^5 -p\zeta &= f_2 dz - p\zeta\\
(\nu^5 -p\tau +(\ri a-q)\zeta) -\ri(\nu^6 +\ri p\tau +\ri b \zeta) &=
f_3 dz + (b-q + \ri a) \zeta\\
\nu^7 +\ri c \zeta &= f_4 dz+\ri c\zeta,
\end{align*}
In order to have $\zeta\ne 0$ on solutions, we need the linear combinations on the
right to be linearly dependent.  Thus, we restrict to the submanifold $W' \subset W$ on which
\begin{equation}\label{wprimeq}
y+\ri x = -\dfrac{f_1 p}{f_2},
\qquad b-q+\ri a = -\dfrac{f_3 p}{f_2},
\qquad c=\ri \dfrac{f_4 p}{f_2}.
\end{equation}

On $W'$, the remaining linearly independent generators of $\Ion$
are the real and imaginary parts of
$$\beta_1 := p\zeta - f_2 dz, \quad \beta_2 := \nu^5 - p\tau -b\zeta -f_3 dz$$
as well as the $\w^a$, which lie in the first derived system of $\Ion$.
Because \eqref{wprimeq}
determines $x,y,a,c,q$ in terms of $b,p$ and the functions of $f_j$, the remaining linearly
independent 1-forms on $W'$ are $\w^1, \ldots, \w^4$, $dp$, $db$, and two
out of the three forms $\w^2_1, \w^4_3$ and $\w^6_5$.  These last satisfy
a linear relation, due to the condition that $f_4/f_2$ is purely imaginary, which takes the form
\begin{equation}\label{dependency}
\w^2_1 - \w^4_3 - \w^6_5 \equiv 0 \mod \beta_1, \beta_2, \w^1, \ldots, \w^4.
\end{equation}

To test the Pfaffian system spanned by $\beta_1,\beta_2$ for involutivity, we compute on $W'$ that
\[\left.
\begin{aligned}
d\beta_1 &\equiv \pi_1 \& \zeta, \\
d\beta_2 &\equiv -\pi_1 \& \tau -\pi_2 \& \zeta \\
\end{aligned}
\right\} \mod \w^a, \beta^1, \beta^2
\]
where
\begin{align*}
\pi_1 &\equiv dp + \ri p (\w^6_5 - \w^4_3 -\w^2_1) \quad &\mod &\tau,\zeta \\
\pi_2 &\equiv db - 2\ri b \w^4_3 + \ri( b+p(f_3/f_2))\w^6_5 \quad &\mod &\zeta, \overline{\zeta}.
\end{align*}
In view of the relation \eqref{dependency}, the real and imaginary
parts of $\pi_1, \pi_2$ will be linearly independent provided that the coefficient
$b+p(f_3/f_2)$ is nonzero.  Accordingly, we restrict our attention to the open set
where this is the case.  Then the Pfaffian system is involutive with Cartan character
$s_1=4$.  The exist of the integral manifold of $\Ion$ inside $W'$, which projects
to the austere submanifold $M\subset \R^7$, follows by application of the Cartan-K\"ahler Theorem.
\end{proof}

The starting ingredient in Theorem \ref{AR7twist} is an integral curve of system $\K$ on the
homogeneous space $V$.  Because $\K$ is generated by holomorphic 1-forms on $V$,
this system is equivalent to an underdetermined system of three ordinary differential equations in suitable local coordinates.
Hence, it is possible that explicit solutions could be written down in terms of 4 arbitrary holomorphic functions.

\medskip
(c) $\boldsymbol{\delta=4}$.
In this case, $|\II |$ is all of the space $\cR$ defined in \eqref{arrdef}.  In this section, we will outline
a twistor-type construction for austere submanifolds $M\subset \R^n$ of this type, similar to that described just above.
However, because the prolongation of $\cR$ is nonzero, we cannot assume that the codimension of $M$
is equal to $\delta$.

Let $\Iruled$ be the standard system with the following choice
of basis for $\cR$:
$$S^5 = \begin{bmatrix} 0 & 0 & 1 & 0\\ 0 & 0 & 0 & -1\\ 1 & 0 & 0 & 0 \\ 0 & -1 & 0 & 0 \end{bmatrix},
S^6 = \begin{bmatrix} 0 & 0& 0 & 1 \\ 0 & 0 & 1 & 0 \\ 0 & 1 & 0 & 0 \\ 1 & 0 & 0 & 0 \end{bmatrix},
S^7 = \begin{bmatrix} 0 & 0 & 0 & 0\\ 0 & 0 & 0 & 0\\ 0 & 0 & 1 & 0 \\ 0 & 0 & 0 & -1 \end{bmatrix},
S^8 = \begin{bmatrix} 0 & 0 & 0 & 0\\ 0 & 0 & 0 & 0 \\ 0 & 0 & 0 & 1\\ 0 & 0 & 1 & 0 \end{bmatrix}.$$

As in the $\delta=3$ case, by differentiating the $\psi^i_j$ we derive integrability conditions;
in this case, these imply that $|\ve_5|=|\ve_6|$ and
$\ve_5 \cdot \ve_6=0$.
Once these conditions are added to the system, it is involutive with Cartan character $s_1 = 4n-16$,
indicating that solutions depend on a choice of $s_1$ functions of one real variable.

To understand the twistor construction, we first observe that the integrability conditions
 imply that to each point $p\in M$ we can associate a 6-dimensional
subspace $F_p \subset \R^n$, spanned by $\ve_1, \ldots, \ve_6$, and an orthogonal
complex structure $\Jhat_p$ on $F_p$, taking $\ve_5$ to $\ve_6$, which extends the complex structure
on $T_pM$.  We may thus define a mapping
$\Gamma:M \to V$ sending $p$ to the triple $(E_p, F_p, \Jhat_p)$, a point in the homogeneous
space $V = SO(n)/SO(2) \times U(2) \times SO(n-6)$.

Then, exactly as before, the form of $\II$ and
the vanishing of the 1-forms $\psi^i_j$ imply that $\Gamma$ satisfies the contact condition
from Remark \ref{charK}.  Furthermore, this means that the image of $\Gamma$ is an integral of a certain complex contact system $\K$ on $V$.

We define the complex structure and contact system on $V$ as follows.  Define the projections
$\rho_1: V \to SO(n)/SO(2)\times SO(n-2)$, the Grassmannian $\Gr(2,n)$, and
$\rho_2: V \to SO(n)/U(3) \times SO(n-6)$.  On $SO(n)$, define the complex-valued 1-forms
$$\eta^p := \w^p_1 - \ri \w^p_2, \qquad \nu^p:= \w^p_3 - \ri \w^p_4, \qquad \sigma^p = \w^p_5-\ri \w^p_6.$$
Then the $(1,0)$-forms for the usual complex structure on $\Gr(2,\R^n)$ are
precisely those that pull back to $SO(n)$ to be in the span of the $\eta^p$ for $p>2$.
Likewise, we define a homogeneous complex structure on the flag manifold $SO(n)/U(3) \times SO(n-6)$
by specifying that the $(1,0)$-forms pull back to $SO(n)$ to give the span
$$\{ \eta^3-\ri \eta^4, \eta^5-\ri \eta^6, \nu^5-\ri \nu^6,
 \eta^7, \nu^7, \sigma^7, \ldots, \eta^n, \nu^n, \sigma^n \}.$$
The complex structure on $V$ is uniquely specified by requiring $\rho_1, \rho_2$ to be holomorphic,
and the Pfaffian system $\K$ is defined as the intersection of the pullback of the $(1,0)$-forms under
$\rho_1$ and the pullback of the $(1,0)$-forms under $\rho_2$.
(Thus, $\K$ has complex rank $n-4$.)
Moreover, holomorphic integrals of $\K$ are exactly those mappings into $V$ that satisfy the contact condition.

With this machinery in place, we can state the analogue of Theorem \ref{AR7twist} for this case.

\begin{theorem}\label{AR8twist}
Let $\CC$ be a holomorphic curve in $V$ which is an integral of $\K$,
and has a nonsingular projection onto $\Gr(2,\R^n)$.  Given
any point $p\in \CC$, there is an open neighborhood $U\subset \CC$ containing $p$
and an austere  $M^4\subset \R^n$ which is Type A, 2-ruled satisfying $\rJ(E)=E$, such
that $\Gamma(M) = U$.
\end{theorem}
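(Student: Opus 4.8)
The plan is to mirror the proof of Theorem~\ref{AR7twist} step for step, the essential new feature being that the nonzero prolongation of $\cR$ forces $M$ to sit in $\R^n$ with possibly $n>8$, so that $\K$ now has complex rank $n-4$ and many more semibasic forms must be carried along. First I would set $N=\pi^{-1}(\CC)\subset SO(n)$, the inverse image of the curve under the quotient $\pi\colon SO(n)\to V$, and choose a local holomorphic coordinate $z$ on a neighborhood $U$ of the given point in $\CC$. Since $\CC$ is a holomorphic integral of $\K$, the generators $\eta^3-\ri\eta^4,\ \eta^5-\ri\eta^6,\ \eta^7,\dots,\eta^n$ all pull back to zero on $N$, while the remaining $(1,0)$-forms of $V$ pull back to multiples of $dz$; this defines holomorphic functions $f_j$ by writing $\eta^3=f_1\,dz$, $\eta^5=f_2\,dz$, and similarly for the surviving $\nu$- and $\sigma$-type forms. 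The nonsingular-projection hypothesis is exactly the statement that $f_1$ and $f_2$ do not vanish simultaneously on $U$.

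Next I would compute, exactly as in \eqref{Ioncomplex}, the complex combinations of the generators $\theta^a_i$ (for $a=5,\dots,n$) and $\psi^i_j$ of $\Ion$ for the $\delta=4$ basis $S^5,\dots,S^8$ of $\cR$ together with the prolongation frame vectors, expressing each in terms of the $\eta^p,\nu^p,\sigma^p,\tau,\zeta$. Comparing these against the $f_j\,dz$ reveals which reality and nonvanishing conditions are needed so that $\zeta$ survives as a nonzero $(1,0)$-form on solutions: as in the $\delta=3$ case one obtains a nonvanishing condition on the analogue of $f_2$ together with purely-imaginary-ratio conditions on certain $f_j/f_2$, cutting $N$ down to a smooth submanifold $N'$ of finite codimension. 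I would then pass to the inverse image $W\subset\Fon\times L$ and restrict to the submanifold $W'\subset W$ on which the parameters (the $x,y$ of \eqref{A1extra}, together with the parameters recording the prolongation and the remaining entries of the $S^a$) are solved for in terms of the $f_j$ and a short list of free functions, as in \eqref{wprimeq}. On $W'$ the restriction of $\Ion$ should collapse onto two essential generators $\beta_1,\beta_2$ --- the analogues of $p\zeta-f_2\,dz$ and $\nu^5-p\tau-b\zeta-f_3\,dz$ --- together with the $\w^a$, which lie in the first derived system.

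To conclude, I would test the Pfaffian system generated by the real and imaginary parts of $\beta_1,\beta_2$ for involutivity by computing $d\beta_1,d\beta_2$ modulo the ideal, tracking the linear relation among the residual connection forms analogous to \eqref{dependency}. On the open set where the relevant nondegeneracy coefficient is nonzero I expect the tableau to be involutive with a positive Cartan character (just as the residual system in the proof of Theorem~\ref{AR7twist} had $s_1=4$). The Cartan--K\"ahler Theorem then produces an integral manifold inside $W'$; projecting onto the first factor of $\Fon\times L$ yields an adapted frame along an austere $M^4\subset\R^n$ that is Type~A and 2-ruled with $\rJ(E)=E$, and which satisfies $\Gamma(M)=U$ by construction.

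The main obstacle is the bookkeeping caused by the nonzero prolongation. Unlike the $\delta=3$ case, the frame vectors $\ve_7,\dots,\ve_n$ carry genuine second-fundamental-form data, so the rank-$(n-4)$ system $\K$ contributes a growing family of semibasic forms, and one must verify that after imposing the integrability conditions $|\ve_5|=|\ve_6|$ and $\ve_5\cdot\ve_6=0$ and solving for all the parameters on $W'$, exactly two independent generators $\beta_1,\beta_2$ survive and the resulting tableau is genuinely involutive. Establishing that this reduction goes through uniformly in $n$ is where the real work lies.
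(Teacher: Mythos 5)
Your proposal matches the paper's own treatment: the authors simply state that the proof is completely analogous to that of Theorem~\ref{AR7twist}, with the Cartan--K\"ahler Theorem applied at the last stage to an involutive residual system with character $s_1=4$, which is exactly the reduction-to-$\beta_1,\beta_2$ scheme you outline. Your closing caveat about verifying that the reduction survives the prolongation bookkeeping uniformly in $n$ is the one step the paper also leaves implicit, so there is no substantive divergence.
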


The proof is completely analogous to that of the earlier theorem; at the last stage, the Cartan-K\"ahler
Theorem is required to construct an integral for an involutive system with character $s_1=4$.


\subsection*{Case A.2: $E \cap \rJ(E) = 0$}  In this case we can use the $U(2)$ symmetry
to choose orthonormal frames so that $E$ is spanned by $\ve_1$ and $\ve_2 + k\ve_3 + m\ve_4$
for some functions $k,m$.
Then the subspace $\cR \subset \Q_A$ of matrices satisfying \eqref{IIalg} is 3-dimensional.
One can check that $\cR^{(1)}=0$, so that $\delta = \dim |\II |$ is the effective codimension of $M$.

Theorem 15 in \cite{ayeaye} implies that any 2-ruled submanifolds of Type A with $\delta=2$
are those with $\rJ(E)=E$, discussed in Case A.1(a) above.
In the case where $\delta=3$,
an extensive analysis of the exterior differential system $\Iruled$
yields integrability conditions which imply that no such submanifolds exist.

\section{Type B Ruled Austere Submanifolds}
In this section we discuss the $2$-ruled austere $4$-folds of Type B.
Recall from \cite{ayeaye} that the maximal austere subspace of Type B is given by
\begin{equation}\label{defofQB}
\Q_B = \left\{ \begin{bmatrix} m & 0 & b_1 & b_2 \\ 0 & m & b_3 & b_4 \\ b_1 & b_3 & -m &0 \\
b_2 & b_4 & 0  & -m \end{bmatrix} \right\},
\end{equation}
and may be characterized as the span of a matrix representing a reflection $\rR$ that
fixes a 2-plane in $\R^4$, together with the symmetric matrices that commute with that reflection.
Depending on the position of the ruling plane $E$ relative to the eigenspaces of $\rR$,
the intersection $\rR(E)\cap E$ is a vector space of dimension equal to 0, 1 or 2. We examine these cases separately.

\medskip
\subsection*{Case B.1:  $\dim \rR(E)\cap E=2$.}   In this case, $E$ can be an eigenspace of $\rR$
or can be a sum of the $+1$ and $-1$ eigenspaces of $\rR$.

\medskip
(a) {\bf $E$ is a eigenspace of $\rR$.} The symmetry group of $\Q_B$ is generated
by conjugation by $O(2) \times O(2)\subset O(4)$ and the permutation $\ve_1 \leftrightarrow \ve_3,
\ve_2\leftrightarrow \ve_4$. Using the permutation we can assume $E=\{\ve_1,\ve_2\}$.
Since the second fundamental form vanishes on the ruling $E$, this implies that $m=0$.
Therefore the subspace  of $\Q_B$ satisfying the algebraic condition \eqref{IIalg} is
$$\cR = \left\{\left.
\begin{bmatrix} 0 & B  \\ B & 0 \end{bmatrix}\right| B \text{ is a $2\times 2$ matrix }
\right\}.$$
Since $\cR^{(1)}$ is zero, the effective codimension of $M$ is equal to $\delta$
We break into subcases according to the value of $\delta$.
If $\delta =4$ or $\delta=3$, an analysis of the EDS $\Iruled$ shows that there
are no integral manifolds satisfying the independence condition.
In the case $\delta=2$ the analysis is more involved, and will be described in what follows.

In this case, $|\II |$ is a 2-dimensional subspace of $\cR$, and
we let $\cB$ denote the 2-dimensional of the matrices $B$ in the upper right corner.
The conjugate action of $O(2)\times O(2)$ on $2\times 2$ matrices is given by $(S,T)\cdot B=SBT^{-1}$
and the determinant is invariant up to sign under this action.
We distinguish several subcases depending
on the type of the quadratic form $\bdet$ given by the determinant restricted to $\cB$:

(a.i) {\bf $\bdet|_{\cB}$ has rank 2}.  In this case we normalize the space of second fundamental forms
so that $\cB$ is spanned by  $\left(\begin{smallmatrix} 1 & 0 \\ 0& p \end{smallmatrix}\right)$
and $\left(\begin{smallmatrix} 0 & 1 \\ q & 0 \end{smallmatrix}\right)$ where $pq<0$ if $\bdet$ is definite and $pq>0$ if $\bdet$ is indefinite.

With respect to the orthonormal basis $(\ve_1, \ve_2, \ve_3, \ve_4)$ of the tangent space to $M$, $| \II |$ is spanned by the matrices
$$S^5 = \begin{pmatrix} 0 & 0 & 1 & 0\\ 0 & 0 & 0 & p\\ 1 & 0 & 0 & 0 \\ 0 & p & 0 & 0 \end{pmatrix},\qquad
S^6 = \begin{pmatrix} 0 & 0& 0 & 1 \\ 0 & 0 & q & 0 \\ 0 & q & 0 & 0 \\ 1 & 0 & 0 & 0 \end{pmatrix}.$$
Let $\Iruled$ be the Pfaffian system, described in \S\ref{stdsection}, whose integral submanifolds
correspond to semiorthonormal frames such that
\begin{equation}\label{twoexpand}
\II(\ve_i, \ve_j) =  S^a_{ij} \ve_a,  \qquad 1\le i,j \le 4, \  5\le a\le 6
\end{equation}
The forms $\phi_1=\w^3, \phi_2=\w^4$ represent a basis of the annihilator of $E$.
The ruling condition \eqref{Pruling} amounts to requiring that
$\omega^3_1, \omega_2^3, \omega_1^4, \omega_2^4\equiv 0$ mod $\{\w^3, \w^4\}$. Therefore, the `extra' 1-forms of $\Iruled$ are
\begin{equation}\label{B1extra}
\begin{aligned}
\psi_1^1&=\omega^3_1-u_1\w^3-u_2\w^4\\\
\psi_2^1&=\omega_2^3-v_1\w^3-v_2\w^4\\
\psi_1^2&=\omega_1^4-x_1\w^3-x_2\w^4\\
\psi_2^2&=\omega_2^4-y_1\w^3-y_2\w^4
\end{aligned}\end{equation}
where $u_1, u_2, v_1, v_2, x_1,x_2, y_1, y_2$ are arbitrary functions on $M$.

\begin{prop}\label{cutdownB}
 The only austere 4-manifolds of this kind are those where $pq$ is identically equal to 1.
\end{prop}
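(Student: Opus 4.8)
The plan is to run the standard integrability analysis for the ruled austere system $\Iruled$ in this frame and to extract the normalization $pq=1$ as a necessary integrability condition. First I would record the consequences of lying on an integral manifold $\Sigma$ satisfying the independence condition. The relations $\theta^a_i=0$ give the second fundamental form explicitly: reading off the rows of $S^5$ and $S^6$,
\begin{equation*}
\begin{aligned}
&\omega^5_1=\w^3,\quad \omega^5_2=p\w^4,\quad \omega^5_3=\w^1,\quad \omega^5_4=p\w^2,\\
&\omega^6_1=\w^4,\quad \omega^6_2=q\w^3,\quad \omega^6_3=q\w^2,\quad \omega^6_4=\w^1,
\end{aligned}
\end{equation*}
while $\psi^i_j=0$ prescribes $\omega^3_1,\omega^3_2,\omega^4_1,\omega^4_2$ as the given combinations of $\w^3,\w^4$ with coefficients $u_i,v_i,x_i,y_i$. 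The connection forms that genuinely remain free are the tangential forms $\omega^2_1,\omega^4_3$ and the normal connection form $\omega^6_5$; since the vectors $\ve_5,\ve_6$ are not assumed orthonormal, the inner products $g_{ab}$ must be carried along as additional functions.

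Next I would differentiate the Pfaffian generators and reduce modulo $\Iruled$. Computing $d\theta^a_i$ for each $a\in\{5,6\}$ and $i\in\{1,2,3,4\}$ via the structure equations \eqref{dconn}--\eqref{diffg}, the vanishing of each resulting $2$-form on $\Sigma$ separates into two kinds of information: relations that pin down the free connection forms $\omega^2_1,\omega^4_3,\omega^6_5$ as explicit combinations of the coframe $\w^1,\dots,\w^4$, together with algebraic (torsion) relations among the functions $u_i,v_i,x_i,y_i,p,q$ and the metric coefficients $g_{ab}$. The key structural output of this step is that the Codazzi relations force the normal connection form $\omega^6_5$ to lie in the span of $\w^1$ and $\w^2$, while simultaneously determining $\omega^2_1,\omega^4_3$ and expressing the torsion coefficients in terms of $p,q,g_{ab}$.

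The condition $pq=1$ then emerges from the normal curvature. The Ricci equation, i.e.\ the structure equation for $d\omega^6_5$, computes the normal curvature $2$-form as the wedge combination $\sum_i \omega^6_i\&\omega^5_i$ (weighted by the $g_{ab}$), whose $\w^3\&\w^4$-component is a nonzero multiple of $1-pq$; this is exactly where the product $pq$, essentially the discriminant of $\bdet|_{\cB}$, enters. On the other hand, having already determined $\omega^6_5$ as an explicit combination of $\w^1,\w^2$ in the previous step, I can differentiate that expression directly, its $\w^3\&\w^4$-component being governed by the tangential data ($d\w^1,d\w^2$ and the torsion coefficients) computed above. Equating the two evaluations of this single coefficient, and substituting the Codazzi relations, yields the scalar identity $1-pq=0$.

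The main obstacle I anticipate is bookkeeping rather than conceptual: there are three undetermined connection forms and eight torsion functions, and because $\ve_5,\ve_6$ are not orthonormal the metric coefficients $g_{ab}$ thread through every structure equation and must be tracked carefully (indeed part of the analysis simultaneously constrains the $g_{ab}$). The delicate point will be to perform the absorption correctly --- separating the absorbable torsion, which merely serves to define the free forms, from the essential torsion, whose vanishing gives genuine conditions --- so that the many relations collapse to the single identity $pq=1$ rather than spuriously over-determining $p$ and $q$ individually. Once $pq\equiv 1$ is established, the remaining freedom is consistent with the later identification of $M$ as a product of classical helicoids or a submanifold of Type A.
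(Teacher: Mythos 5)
Your first stage matches the paper's: differentiating the $\theta^a_i$ modulo $\Iruled$ does produce torsion conditions (the paper's \eqref{Bintcond}) together with eight 1-forms $\pi_1,\dots,\pi_8$ in the free connection forms, $dp$, $dq$ and the $\w^i$ that must be adjoined to the ideal. But the mechanism you propose for extracting $pq=1$ --- comparing the $\w^3\wedge\w^4$ component of the normal curvature $\sum_i\w^6_i\wedge\w^5_i$ (which is indeed proportional to $pq-1$) with the same component of $d\w^6_5$ computed from its expression in the coframe --- does not close up into the scalar identity $1-pq=0$. The reason is that the second evaluation is not zero: if $\w^6_5=A\w^1+B\w^2$, then $d\w^6_5$ acquires $\w^3\wedge\w^4$ terms through $d\w^1$ and $d\w^2$, whose coefficients involve the undetermined torsion functions $u_i,v_i,x_i,y_i$ of \eqref{B1extra}. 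What you actually get is $pq-1=(\text{polynomial in the torsion and the }g_{ab})$, and nothing at this stage forces the right-hand side to vanish; indeed in the paper's analysis the torsion does \emph{not} vanish in general (e.g.\ one surviving branch has $v_1=(q/p)y_1$, $v_2=y_1$).

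The genuine content of the paper's proof lies in the step you skip: after adjoining $\pi_1,\dots,\pi_8$ and restricting to \eqref{Bintcond}, one must differentiate the $\psi^i_j$ themselves. This produces (see \eqref{calcpis2}) eight essential torsion coefficients $F_1,\dots,F_8$, polynomial in $p,q,v_1,v_2,y_1,y_2$ and \emph{linear} in $g_{55},g_{56},g_{66}$. Eliminating the $g_{ab}$ from $F_1=\dots=F_8=0$ yields five homogeneous quadratics in $v_1,v_2,y_1,y_2$, three of which form a linear system in the products $v_1v_2,v_1y_1,y_1y_2,v_2y_2$ whose coefficient matrix drops rank exactly when $pq=\pm1$ or $p+q=0$. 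For generic $p,q$ the solution set forces either $g_{55}=g_{66}=0$ or $g_{55}=0$, contradicting positive-definiteness of the normal metric; the borderline cases $pq=-1$ and $p+q=0$ then require separate (and similar) eliminations. So the conclusion $pq=1$ is the residue of a rank-and-positivity case analysis at the second level of integrability conditions, not a single Ricci-equation identity; your proposal as written would stall after the first differentiation, with the quantity $pq-1$ still coupled to unknown torsion.
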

\begin{proof}
We analyze the structure equations of the standard system $\Iruled$ with the 1-forms $\psi_j^i$ given above. We compute:
\begin{equation}\label{calcpis1}
\left.\begin{aligned}
d\theta^5_1&=\pi_1\wedge \omega^3+\pi_2\wedge \omega^4\\
d\theta^5_2&=\pi_3\wedge \omega^3+\pi_4\wedge \omega^4\\
d\theta^5_3&=\pi_1\wedge \omega^1+\pi_3\wedge \omega^2+(v_1p+x_1-2u_2)\omega^3\wedge \omega^4\\
d\theta^5_4&=\pi_2\wedge \omega^1+\pi_4\wedge \omega^2+( 2y_1p-x_2-v_2p)\omega^3\wedge \omega^4\\
d\theta^6_1&=\pi_5\wedge \omega^3+\pi_6\wedge \omega^3   \\
d\theta^6_2&=\pi_7\wedge \omega^3+\pi_8\wedge \omega^4\\
d\theta^6_3&=\pi_5\wedge \omega^1+\pi_7\wedge \omega^2+(u_1+qy_1-2qv_2)\omega^3\wedge \omega^4\\
d\theta^6_4&=\pi_6\wedge \omega^1+\pi_8\wedge \omega^2+(2x_1-u_2-qy_2)\omega^3\wedge \omega^4
\end{aligned}\right\}\qquad \mod \omega^a, \theta^a_i, \psi_j^i
\end{equation}
for certain forms $\pi_1, \dots, \pi_8$ which are linearly independent combinations
of the connection forms $\omega^i_j, \omega_b^a, dp, dq$ and $\omega^i$.
The 2-forms on the right hand side must vanish on an integral element.
Wedging the third, fourth, seventh and eighth 2-forms with $\omega^1\wedge\omega^2$
gives the following integrability conditions:
\begin{equation}\label{Bintcond}
v_1p+x_1-2u_2=0, \ 2y_1p-x_2-v_2p=0, \ u_1+qy_1-2qv_2=0, \ 2x_1-u_2-qy_2=0.
\end{equation}
Moreover, equations \eqref{calcpis1} show that all the forms $\pi_1, \dots, \pi_8$
must vanish on any integral $4$-plane also, so they have to be added to the ideal.
For example, the vanishing of the 2-form $d\theta^5_1$ implies that, on any integral element
satisfying the independence condition, $\pi_1$ must be a linear combination of $\omega^3$ and $\omega^4$,
while the vanishing of $d\theta^5_3$ implies that $\pi_1$ must be a linear combination of $\omega^1$ and $\omega^2$.
Therefore, $\pi_1=0$ on any such integral element.

Let $\J$ be the differential ideal obtained from adding the forms $\pi_1, \dots, \pi_8$
to the original ideal $\I$, and restricting to the submanifold where the integrability
conditions \eqref{Bintcond} hold (by solving for $u_1, u_2, x_1, x_2$).  We analyze this new ideal.
While $d\theta^a_i\equiv 0$ modulo the one-forms in $\J$, we also compute that:
\begin{align} \label{calcpis2}
d\psi^1_1&=(2q\pi_{10}-q\pi_{11})\wedge \omega^3+\left(\tfrac{1}{3}q\pi_{12}
+\tfrac{2}{3}p\pi_9\right)\wedge\omega^4+F_1\omega^{13}+F_2\omega^{23}+F_3\omega^{14}+F_4\omega^{24}\notag\\
d\psi^1_2&=\pi_{9}\wedge \omega^3+\pi_{10}\wedge \omega^4\\
d\psi^2_1&=\left(\tfrac{1}{3}p\pi_9+\tfrac{2}{3}q\pi_{12}\right)\wedge \omega^3+(2p\pi_{11}-p\pi_{10})\wedge \omega^4
+F_5\omega^{13}+F_6\omega^{23}+F_7\omega^{14}+F_8\omega^{24}\notag\\
d\psi^2_2&=\pi_{11}\wedge \omega^3+\pi_{12}\wedge \omega^4\notag
\end{align}
where $\pi_9, \ldots, \pi_{12}$ are
linearly independent combinations of
$dv_1, dv_2, dy_1, dy_2$ and the $\w^i$, and $F_1, \dots F_8$ are certain polynomial functions in $p,q, v_1, v_2, y_1,y_2, g_{55}, g_{56}$ and $g_{66}$.
(We also abbreviate $\omega^i\wedge \omega^j$ by $\omega^{ij}$ in the above.)

In the equations \eqref{calcpis2}, the forms $\pi_9, \pi_{10}, \pi_{11}$ and $\pi_{12}$
are not unique, but can be adjusted only by multiples of the forms $\omega^3$ and $\omega^4$;
therefore all the expressions $F_1, \dots, F_8$ must vanish at points where admissible integral elements
of the ideal $\J$ exist. These give a system of 8 linear equations in $g_{55}, g_{56}, g_{66}$
with coefficients depending on $p,q, v_1, v_2, y_1,y_2$.
Eliminating the $g_{ab}$ gives 5 homogeneous quadratic equations in $v_1, v_2, y_1, y_2$
with coefficients $p$ and $q$.  Of these, three equations constitute a homogeneous linear system in $v_1v_2, v_1y_1, y_1y_2, v_2y_2$.
The coefficient matrix of this system has rank 3, unless $pq=1$ or $pq=-1$ or $p+q=0$.
The latter cases will be considered separately later on; for now, suppose that $p+q\not =0$ and $pq\not = \pm 1$.
Then the vector $(v_1v_2, v_1y_1, y_1y_2, v_2y_2)$ must be a multiple
of $(q,q,p,p)$, which spans the kernel of the matrix.

We distinguish two possible subcases.  First,
suppose that $v_1$ and $y_2$ are identically zero on an integral submanifold.
Then $F_2=0$ implies that $g_{56}=0$, and the 2-forms \eqref{calcpis2} determine
the values of $dv_2$ and $dy_1$ uniquely on an integral element.
Setting $d^2 v_2=0$ and $d^2 y_1=0$ implies that $g_{55}=g_{66}=0$, which is impossible.
Second, if $v_1$ or $y_2$ is nonzero, then $v_1 = (q/p) y_1$ and $v_2=y_1$.
Then $F_1=\ldots=F_8=0$ implies that $g_{55}=0$, which again is impossible.

In the case that $p+q=0$, a computation of integrability conditions similar to \eqref{calcpis1}
gives that $v_1=y_2$ and $v_2=-y_1$; then, restricting to the submanifold where these relations hold and computing $d\psi^i_j$
yields contradictory integrability conditions.  We eliminate the case $pq=-1$ using similar computations.
\end{proof}

In what follows, we deal with the last case, when $pq=1$.  By making a convenient change of frame
we can assume that $|\II|$ is spanned by matrices
$$S^5 = \begin{pmatrix} 0 & 0 & 1 & p\\ 0 & 0 & 0 & 0\\ 1 & 0 & 0 & 0 \\ p & 0 & 0 & 0 \end{pmatrix},\quad
S^6 = \begin{pmatrix} 0 & 0& 0 & 0 \\ 0 & 0 & 1 & -p \\ 0 & 1 & 0 & 0 \\ 0 & -p & 0 & 0 \end{pmatrix},$$
where $p$ is some positive function on $M$.
The new basis for the normal space  is distinguished by the fact that the components
of $\II$ in the direction of each of $\vn_5, \vn_6$ has rank 2, with a 3-dimensional nullspace.
Within $E$, the lines spanned by $\ve_1$ and $\ve_2$ are distinguished as intersections with these nullspaces.
In fact, with respect to the orthonormal coframe $(\w^1, \w^2, \w^3, \w^4)$
on the tangent space, we may write
$$\II = \vn_5 \otimes \w^1 \circ \kappa_1 + \vn_6 \otimes \w^2 \circ \kappa_2,$$
where, for later convenience, we define
$$\kappa_1 = \w^3 +p\,\w^4, \qquad \kappa_2 = \w^3 - p\,\w^4.$$

Using that $d\theta^a_i\equiv 0$ mod the one-forms in the ideal $\I$ gives the integrability conditions:
$$v_1=-\frac{y_1}{p}, y_2=-py_1, u_1=\frac{x_1}{p}, x_2=px_1, u_2=x_1, v_2=y_1$$ Let us denote $x_1=x$ and $y_1=y$.
Therefore, the standard system $\I$ is defined on $\F \times \R^3$, with coordinates
$x,y$ and $p>0$ on the last factor, and the extra 1-forms are given by:
\begin{align*}
\psi^1_1 &= \w^3_1 - \frac{x}p \kappa_1, &
\psi^1_2 &= \w^3_2+\frac{y}p \kappa_2,\\
\psi^2_1 &= \w^4_1 -x\kappa_1, &
\psi^2_2 &=\w^4_2 - y\kappa_2.
\end{align*}
By computing $$(d\psi^3_1 \& \w^3 + d\psi^4_1 \& \w^4) \& \w^1$$ modulo $\I$,
we conclude that integral elements occur only at points where the following extra integrability condition holds:
\begin{equation}\label{gval}
g_{56} = xy(p^{-2}-1).\
\end{equation}
Restricting to the codimension-one submanifold $N\subset \F\times \R^3$ where this condition holds yields
an EDS $\I'$ with a unique integral element at each point.  Adjoining to $\I'$
the 1-forms that vanish on these yields a Frobenius system $\J$ on $N$, whose solutions depend on 22 constants.

We now wish to interpret the solutions.

\begin{prop}\label{heliprop}
Let $M$ be a connected, 2-ruled austere submanifold of type B, substantial
in $\R^6$, such that $E=\rR(E)$ and the rank of $\bdet |_{\cB}$ is 2.  Then $M$ is congruent to an open subset of a Cartesian
product $H_1 \times H_2 \subset \R^3 \oplus \R^3$ where $H_\alpha \subset \R^3$
is a helicoid, in which the rulings of the two helicoids are mutually orthogonal,
but the axes of the helicoids are not necessarily orthogonal.
\end{prop}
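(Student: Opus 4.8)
The plan is to exhibit $M$ as a product of two surfaces by splitting both $\R^6$ and the frame according to the nullspaces of the two components of $\II$. Write $\Delta_1:=\ker S^6$ and $\Delta_2:=\ker S^5$ for the $2$-plane fields annihilated by the $\vn_6$- and $\vn_5$-components of the second fundamental form; from the displayed forms of $S^5,S^6$ in the $pq=1$ case one reads off $\Delta_1=\operatorname{span}(\ve_1,\,p\,\ve_3+\ve_4)$ and $\Delta_2=\operatorname{span}(\ve_2,\,p\,\ve_3-\ve_4)$, so that $TM=\Delta_1\oplus\Delta_2$. Using the decomposition $\II=\vn_5\otimes\w^1\circ\kappa_1+\vn_6\otimes\w^2\circ\kappa_2$ together with $\Delta_\alpha=\ker S^{\{6,5\}}$, I would first record the two structural facts that drive everything: (i) $\II$ is block diagonal, $\II(\Delta_1,\Delta_2)=0$, since $S^5$ kills $\Delta_2$ and $S^6$ kills $\Delta_1$; and (ii) restricted to $\Delta_1$ the form $\II$ takes values in $\vn_5$ and equals the off-diagonal ``helicoid'' form $\vn_5\otimes\w^1\circ\kappa_1$ (rank two, trace zero), and symmetrically on $\Delta_2$ with $\vn_6$. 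I would also note that $\langle p\,\ve_3+\ve_4,\ p\,\ve_3-\ve_4\rangle=p^2-1$, so $\Delta_1$ and $\Delta_2$ are \emph{not} orthogonal unless $p=1$; this anticipates the non-orthogonality of the product and of the two axes.

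The heart of the proof is to show that the rank-three subbundles $\Pi_1:=\Delta_1\oplus\R\,\vn_5$ and $\Pi_2:=\Delta_2\oplus\R\,\vn_6$ are \emph{parallel}, i.e.\ constant, subspaces of $\R^6$ along $M$. Since $\Pi_1\oplus\Pi_2=\R^6$ pointwise, it suffices to check $\mathsf D_X Z\in\Pi_1$ for every section $Z$ of $\Pi_1$ and every $X\in TM$. The cross terms are the delicate ones: for $X\in\Delta_2$ and $Z\in\Delta_1$ the normal part of $\mathsf D_X Z$ vanishes by (i), so one needs only the tangential condition $\nabla_{\Delta_2}\Delta_1\subseteq\Delta_1$; and for $Z=\vn_5$ with $X\in\Delta_2$ the shape operator $A_{\vn_5}$ already annihilates $\Delta_2=\ker S^5$, so one needs only that the normal connection preserve $\R\,\vn_5$ along $\Delta_2$, i.e.\ that the relevant component of $\w^6_5$ vanish. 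I expect this to be \textbf{the main obstacle}: all of these vanishings must be extracted from the structure equations on the Frobenius reduction $\J$, using the integrability relations already derived (the values $v_1=-y_1/p,\ y_2=-p\,y_1,\ u_1=x_1/p,\ x_2=p\,x_1,\ u_2=x_1,\ v_2=y_1$, the condition \eqref{gval} for $g_{56}$, and the consequent expressions for the connection forms). The same computation should also force $dp=0$, so that $p$ is locally constant and $\Pi_1,\Pi_2$ are genuinely fixed complementary subspaces.

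Granting parallelism, the splitting is elementary. Fix the decomposition $\R^6=\Pi_1\oplus\Pi_2$, let $\pi_\alpha$ be the (in general oblique) projections onto the two factors, and set $f_\alpha:=\pi_\alpha\circ f$, where $f:M\to\R^6$ is the immersion. Because $df_1=\pi_1\,df$ annihilates $\Delta_2\subset\Pi_2$ and $df_2=\pi_2\,df$ annihilates $\Delta_1\subset\Pi_1$, the maps $f_1,f_2$ are constant along the leaves of $\Delta_2$ and $\Delta_1$ respectively; these leaves are exactly the level sets of $f_2$ and of $f_1$ (which makes $\Delta_1,\Delta_2$ integrable), so $f_1$ descends to an immersion $H_1$ of the $\Delta_1$-leaf space into the Euclidean $3$-space $\Pi_1$, and likewise $H_2\subset\Pi_2$. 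Then $f=f_1+f_2$ exhibits $M$ as the Cartesian product $H_1\times H_2$ in the splitting $\R^6=\Pi_1\oplus\Pi_2$.

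Finally I would identify each factor as a helicoid. The surface $H_1\subset\Pi_1$ has normal direction spanned by $\vn_5$ (the orthogonal complement of $\Delta_1$ in $\Pi_1$, since $\vn_5\perp TM$); it is minimal because $A_{\vn_5}\restr_{\Delta_1}$ is trace free by austerity, it is non-flat because that shape operator has rank two (this is where the hypothesis that $\bdet\restr_{\cB}$ has rank two enters, forcing $p\neq0$), and it is ruled by the $\ve_1$-lines, which are honest straight lines in $\R^6$ because $M$ is ruled by the $2$-planes $E$ by \eqref{Druling} and these project isometrically into $\Pi_1$. By Catalan's classical theorem that the only ruled minimal surfaces in $\R^3$ are the plane and the helicoid, $H_1$ is a helicoid, and by the same argument so is $H_2$. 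The rulings $\ve_1,\ve_2$ of the two factors are mutually orthogonal, while the axes of $H_1,H_2$ lie in $\Pi_1,\Pi_2$; since these subspaces are orthogonal precisely when $p=1$, the axes need not be orthogonal in general, which is exactly the asserted conclusion.
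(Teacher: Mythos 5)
Your overall strategy---split $\R^6$ into two parallel $3$-dimensional summands, project obliquely, and identify each factor as a ruled minimal surface---is the same as the paper's, and your identification of the tangent distributions $\Delta_1=\ker S^6$, $\Delta_2=\ker S^5$ of the two factors is correct. But the step you yourself flag as the main obstacle fails as stated: the subbundles $\Pi_1=\Delta_1\oplus\R\,\vn_5$ and $\Pi_2=\Delta_2\oplus\R\,\vn_6$ are \emph{not} parallel unless $p\equiv 1$ (the orthogonal case). Indeed, using $\w^i_5=-(g_{55}\w^5_i+g_{56}\w^6_i)$ together with $\w^5_i=S^5_{ij}\w^j$ and $\w^6_i=S^6_{ij}\w^j$, the $(\ve_3,\ve_4)$-component of $\mathsf D_{\ve_1}\vn_5$ equals $-g_{55}(\ve_3+p\,\ve_4)$, and
$\ve_3+p\,\ve_4=\tfrac{p+p^{-1}}{2}(p\ve_3+\ve_4)+\tfrac{p^{-1}-p}{2}(p\ve_3-\ve_4)$
has a nonzero component along $\Delta_2$ whenever $p^2\ne1$. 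So the structure equations cannot deliver $\mathsf D\vn_5\subset\Pi_1$; the computation you defer would instead force $g_{55}(1-p^2)=0$, which is false precisely in the non-orthogonal case the proposition is about. (Your auxiliary expectation $dp=0$ is also not borne out: the paper only obtains that $p$ is a function of the ruling-transverse coordinates $s,t$.)

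The correct parallel summands require a \emph{tangential correction} to the normal directions: the paper sets $\vf_5=\vn_5+x(\ve_4+p^{-1}\ve_3)$ and $\vf_6=\vn_6+y(\ve_4-p^{-1}\ve_3)$, shows that $\{\ve_1,\vf_5\}$ and $\{\ve_2,\vf_6\}$ span fixed mutually orthogonal $2$-planes (orthogonality of $\vf_5,\vf_6$ being exactly the content of the integrability condition \eqref{gval}, whereas $\vn_5\cdot\vn_6=g_{56}$ is generically nonzero), and that the axis vectors $\vf_3-(s/L_1)\vf_5$ and $\vf_4-(t/L_2)\vf_6$ are constant; the summands are then $\{\ve_1,\vf_3,\vf_5\}$ and $\{\ve_2,\vf_4,\vf_6\}$. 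Note that the tangential part of $\vf_5$ is proportional to $\ve_3+p\ve_4$, which lies in neither $\Delta_1$ nor $\Delta_2$, so the true $\Pi_1$ is not of the form $\Delta_1\oplus(\text{normal line})$ at all. With the corrected summands in hand, your concluding steps (oblique projections, integrability of the $\Delta_\alpha$, minimality and ruledness of each factor, Catalan's theorem) do go through and give a somewhat slicker finish than the paper's explicit verification that the coordinate curves are helices; but as written the argument rests on a parallelism claim that is false.
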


\begin{proof}
 Let $(\ve_1, \ldots, \ve_4, \vn_5, \vn_6)$ be a semiorthonormal
framing on $M$, which induces an integral $\Sigma$ of system $\J$.
If we define vectors
$$\vf_5:=\vn_5 + x(\ve_4+p^{-1}\ve_3),\quad \vf_6:= \vn_6+y(\ve_4-p^{-1}\ve_3)$$
then
$$\left.
\begin{aligned}
d\ve_1 &\equiv \vf_5 \kappa_1, & d\ve_2 &\equiv \vf_6 \kappa_2\\
d\left(\dfrac{\vf_5}{L_1}\right) &\equiv -L_1 \ve_1 \kappa_1, &
d\left(\dfrac{\vf_6}{L_2}\right) &\equiv -L_2 \ve_2 \kappa_2
\end{aligned}\right\} \mod \J,
$$
where $L_1 = |\vf_5|$ and $L_2 = |\vf_6|$.  (Note that
\eqref{gval} implies that $\vf_5$ and $\vf_6$ are orthogonal.)
This shows that the orthogonal
planes $\rP_1, \rP_2$ through the origin in $\R^6$, spanned by $\{\ve_1, \vf_5\}$ and $\{\ve_2, \vf_6\}$
respectively, are fixed, independent of the choice of point $p\in M$.  Within the rulings,
the lines spanned by $\ve_1$ and $\ve_2$ are parallel to $\rP_1$ and $\rP_2$, respectively.

In addition, we compute that $\w^1, \w^2, L_1\kappa_1, L_2 \kappa_2$ are all closed
modulo $\J$, so we may introduce coordinates $s,t,u,v$ along $M$ such that
$$ds = \w^1, \quad dt = \w^2, \quad du = L_1\kappa_1, \quad dv=L_2\kappa_2.$$
We also find that
$$d\left(\dfrac{x}{p L_1^2}\right) \equiv \w^1, \qquad d\left(\dfrac{y}{p L_2^2}\right)\equiv -\w^2\qquad
\mod \J,$$
so that we may choose the arclength coordinates $s,t$ along the $\ve_1$- and $\ve_2$-lines
satisfying
$$x=p L_1^2 s, \qquad y = -p L_2^2 t.$$

In terms of these coordinates, the derivative of the basepoint
map $\vp:\F \to \R^6$ is
\begin{equation}\label{deep}
d\vp \equiv \ve_1 ds + \ve_2 dt + \vf_3 du + \vf_4 dv
\mod \J,\end{equation}
where
$$\vf_3 = \dfrac1{2L_1}(\ve_3 + p^{-1}\ve_4), \qquad \vf_4 = \dfrac1{2L_2}(\ve_3 -p^{-1}\ve_4).$$
Computing the projections of these vectors onto the orthogonal complements of
of $\vf_5$ and $\vf_6$ respectively, we find that
$$d\left(\vf_3 - \dfrac{s}{L_1} \vf_5\right) \equiv 0, \quad
d\left(\vf_4 - \dfrac{t}{L_2}\vf_6\right)\equiv 0\qquad \mod \J.$$
In other words, these vectors are constant along $M$.

Now consider the splitting
\begin{equation}\label{splitr}
\R^6 = \{\ve_1, \vf_3, \vf_5  \} \oplus \{\ve_2, \vf_4, \vf_6\},
\end{equation}
and let $\pi_1, \pi_2$ be the projections onto the fixed $\R^3$ summands on the right.
Then \eqref{deep} shows that $\pi_1\restr_M$ and $\pi_2\restr_M$ are rank two,
with coordinates $s,u$ and $t,v$ respectively on the images.
In fact, the images are open subsets of classical helicoids.  For example, on the surface $\pi_1(M)$
the vectors $\ve_1$ and $\vf_3$ span the tangent space of the surface,
and are tangent to the $s$- and $u$-coordinate curves respectively.
The $s$-coordinate curves are straight lines.
Since $p, L_1, L_2$ are functions of $s,t$ only along $M$,
$|\vf_3|$ is constant along the $u$-coordinate curves.  We compute
$$\dfrac{\di}{\di u} \vf_3 = -s\ve_1, \quad \dfrac{\di}{\di u} \ve_1 = \dfrac{1}{L_1}\vf_5,$$
which shows that the $u$-coordinate curves are helices with
curvature $s/|\vf_3|$ and torsion $1/|\vf_3|$.

We note that the splitting \eqref{splitr} is not orthogonal, unless $p$ is identically equal to 1.
In fact, the inner product of the fixed vectors pointing along
the axes of the helicoids is given by
$$\left(\vf_3 - \dfrac{s}{L_1} \vf_5\right) \cdot \left(\vf_4 - \dfrac{t}{L_2}\vf_6\right)
=\dfrac{1-p^{-2}}{4L_1 L_2}.$$
\end{proof}

(a.ii) {\bf $\bdet|_{\cB}$ has rank 1.} In this case we can normalize so that $\cB$ is spanned by
$\left(\begin{smallmatrix} 0 & 0 \\ 1 & 0 \end{smallmatrix}\right)$
and $\left(\begin{smallmatrix} 1 & 0 \\ 0 & p \end{smallmatrix}\right)$ for $p\ne 0$.
Thus, with respect to an orthonormal basis for the tangent space, $| \II |$ is spanned by the matrices
$$S^5 = \begin{pmatrix} 0 & 0 & 0 & 0\\ 0 & 0 & 1 & 0\\ 0 & 1 & 0 & 0 \\ 0 & 0 & 0 & 0 \end{pmatrix},\qquad
S^6 = \begin{pmatrix} 0 & 0& 1 & 0 \\ 0 & 0 & 0 & p \\ 1 & 0 & 0 & 0 \\ 0 & p & 0 & 0 \end{pmatrix}.$$

Let $\I$ be the Pfaffian system for ruled austere submanifolds, with semiorthonormal frames such
that \eqref{twoexpand} holds for the above matrices $S^5, S^6$.
The 1-forms encoding the tangential ruled condition are as in \eqref{B1extra}.

Computing the 2-forms of this system yields immediate integrability conditions
$$u_2=0, \quad x_1 = -p v_1, \quad x_2 = 3p v_2, \quad y_1 = 2 v_2, \quad y_2=0.$$
We restrict to the submanifold where these conditions hold and derive further integrability
conditions on the remaining variables, which imply that $v_2=0$ and either $u_1=0$ or $v_1=0$.
In both cases, new integrability conditions arise that imply that $\ve_6 \cdot \ve_6=0$ which is impossible.
Therefore, there are no integral submanifolds in this case.

(a.iii) {\bf  $\bdet|_{\cB}\equiv 0$.} In this case we can normalize so that
$\cB= \{ \left( \begin{smallmatrix} x & 0 \\ y & 0 \end{smallmatrix}\right) \}$ or
$\{ \left( \begin{smallmatrix} x & y \\0 & 0 \end{smallmatrix}\right) \}$. In either case,  the quadratic forms in $|\II |$
have a common linear factor, hence  $|\II |$ is {\it simple}, and $M$ is a generalized helicoid.  We will determine what
are the particular values for the constants in \eqref{heliparm}.

We consider the first case, namely when $\cB$ is of a form $\{ \left( \begin{smallmatrix} x & 0\\y & 0 \end{smallmatrix}\right) \}$.
Integrability conditions imply that the extra 1-forms \eqref{B1extra} of the ideal $\Iruled$ must
have $u_2, v_2,x_1, x_2,y_1,y_2$ all zero. The first prolongation $\Iruled ^{(1)}$ of the ideal is
involutive---in fact, Frobenius, with solutions depending on 21 arbitrary constants.  Moreover, we compute that
$$d \ve_4\equiv 0 \quad \mod \Iruled^{(1)},$$
indicating that $\ve_4$ is parallel to a fixed line.  Thus, $M$ is congruent to the product of a line with a
generalized helicoid in $\R^5$ (i.e., $s=2$ in \eqref{heliparm}).

The second case is when $\cB$ is of the form $\{ \left( \begin{smallmatrix} x & y \\0 & 0 \end{smallmatrix}\right) \}$.
In this case, integrability conditions imply that all coefficients in \eqref{B1extra} vanish except for $y_2$.  Again,
the first prolongation is Frobenius, with solutions depending on 17 arbitrary constants.
However, in this case $M$ is a cone, with the vector $\ve_2$ tangent to the generators, and
is ruled by 3-planes spanned by $\{\ve_2,\ve_3,\ve_4\}$.  This corresponds to $s=3$, $\lambda_0$ and
$\lambda_1=\lambda_2=\lambda_3$ in \eqref{heliparm}.   Thus, $M$ is a cone over a generalized
Clifford torus, the embedding
of the product of the unit spheres $S^1\subset \R^2$ and $S^2 \subset \R^3$ into $S^5 \subset \R^6$.

We summarize this case as follows:
\begin{prop}  Let $M$ be a connected, 2-ruled austere submanifold of type B, substantial
in $\R^6$, such that $E=\rR(E)$ and $\delta=2$.  Then $M$ is congruent to an open subset of one of
the following: a product  of helicoids in $\R^3$,
a generalized helicoid in $\R^6$ with $s=2$ and $\lambda_0\ne 0$, or a cone over $S^1 \times S^2 \subset S^5$.
\end{prop}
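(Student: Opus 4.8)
The plan is to assemble the three sub-analyses (a.i)--(a.iii) above, organized by the rank of the determinant form, and then to match each surviving possibility to the standard parametrization \eqref{heliparm}. First I would record the reduction: since $E=\rR(E)$, either $E$ is an eigenspace of $\rR$ or $E$ is a sum of the $+1$ and $-1$ eigenspaces; taking up the eigenspace configuration, the permutation $\ve_1\leftrightarrow\ve_3$, $\ve_2\leftrightarrow\ve_4$ normalizes $E=\{\ve_1,\ve_2\}$ and the ruling condition forces $m=0$ in \eqref{defofQB}. The algebraic condition \eqref{IIalg} then confines $|\II|$ to the off-diagonal-block space $\cR$, and since $\cR^{(1)}=0$ the effective codimension equals $\delta=2$, so $M$ is substantial in $\R^6$. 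Writing $\cB$ for the $2$-plane of upper-right blocks, the $O(2)\times O(2)$-action is $B\mapsto SBT^{-1}$, under which $\bdet$ is invariant up to sign; hence the rank of $\bdet|_{\cB}$ lies in $\{0,1,2\}$, is a genuine invariant, and every such $\cB$ is equivalent to exactly one of the normalized representatives used above.

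Next I would collect the outcome in each stratum. In the rank-$2$ stratum, Proposition \ref{cutdownB} forces $pq\equiv 1$, and Proposition \ref{heliprop} then identifies $M$ with an open subset of a product $H_1\times H_2$ of helicoids in $\R^3$, with axes that need not be orthogonal. In the rank-$1$ stratum, the integrability analysis of (a.ii) drives the system to $\ve_6\cdot\ve_6=0$, which is impossible, so no integral submanifolds occur. In the rank-$0$ stratum, the quadratic forms in $|\II|$ share a common linear factor, so $|\II|$ is \emph{simple} and Bryant's theorem (quoted in \S1) makes $M$ a generalized helicoid; the two normalizations of (a.iii) produce, respectively, the product of a line with an irreducible $s=2$ helicoid in $\R^5$---equivalently a generalized helicoid in $\R^6$ with $s=2$ and $\lambda_0\neq 0$---and, when $\lambda_0=0$, a cone over $S^1\times S^2\subset S^5$. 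As the three strata are exhaustive, this gives exactly the stated trichotomy.

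Almost all of the labor lives in the two propositions and the EDS computations already in hand, so the real content of this summary is the bookkeeping that makes the case division airtight. The point I would watch most carefully is exhaustiveness: I must confirm that the rank trichotomy covers every $2$-plane $\cB$, that the degenerate loci flagged inside Proposition \ref{cutdownB} (namely $pq=-1$ and $p+q=0$) are genuinely ruled out rather than hiding overlooked solutions, and that each coordinate description matches \eqref{heliparm} with $(m,s,n)=(4,2,6)$ and the appropriate constants $\lambda_i$. Finally, for complete logical coverage of the hypothesis $E=\rR(E)$ I would note that the one configuration not absorbed by the reduction above---$E$ a sum of the $+1$ and $-1$ eigenspaces of $\rR$---must be dispatched by the same style of integrability analysis; I expect it to yield no further examples substantial in $\R^6$, so that the displayed list is complete.
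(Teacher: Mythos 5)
Your proposal follows the paper's own route essentially verbatim: the proposition is proved there as a summary of the stratification by the rank of $\bdet|_{\cB}$, with the rank-2 stratum handled by Propositions \ref{cutdownB} and \ref{heliprop}, the rank-1 stratum eliminated via the impossible condition $\ve_6\cdot\ve_6=0$, and the rank-0 stratum reduced to Bryant's theorem on simple $|\II|$, yielding the $s=2$ helicoid and the cone over $S^1\times S^2$. Your closing concern about the remaining configuration allowed by $E=\rR(E)$ (namely $E$ a sum of the $+1$ and $-1$ eigenspaces of $\rR$) is exactly right and is resolved as you predict: the paper dispatches it immediately after the proposition by showing the integrability conditions are contradictory, so no further examples arise.
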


\medskip
(b) {\bf $E$ is a sum of $+1$ and $-1$ eigenspaces of $\rR$.}
In this case we can assume that $E=\{\ve_1, \ve_3\}$. This implies that $m=b_1=0$ and therefore $\delta\leq 3$.
Analyzing the standard system $\Iruled$ in either the case $\delta=3$ and $\delta=2$ leads
to impossible integrability conditions.  Thus, no submanifolds of this type exist.

\subsection*{Case B.2: $\dim R(E)\cap E=1$}  Here we can arrange that $E=\{\ve_1, \ve_3+a\ve_2\}$ for $a\ne 0$.
In this case the standard system is involutive with $s_2=2$, so that
solutions depending on 2 functions of 2 variables.  However, the algebraic condition \eqref{IIalg}
implies that $|\II |$ is simple, and thus $M$ is a generalized helicoid.  In fact, it is a cone, ruled by 3-planes
spanned by $\ve_1,\ve_2, \ve_3$.  (The 2 functions come in when we choose a 2-plane field within the ruling.)
This corresponds to $s=2$ and $\lambda_0=0$ in \eqref{heliparm}, but in this instance the constants
$\lambda_1, \lambda_2, \lambda_3$ may be distinct.

\subsection*{Case B.3:  $R(E)\cap E=\{0\}$} In this case we can normalize so that
$E=\{\ve_1 +a \ve_3, \ve_2 +   \ve_4 \}$ for $a,b$ both nonzero.  It follows from \eqref{IIalg} that $\delta \leq 2$.   Assuming that $\delta=2$,
analyzing the standard system shows that solutions only exist when $a=b$.  In this case,
$|\II|$ is also of type $A$, with $\rJ(E)=E$, so these submanifolds are described in Case A.1(a) above.

\section{Type C Ruled Austere Submanifolds}

In this section we discuss the ruled austere $4$-folds of Type C. We recall from \cite{Baustere}
that a maximal austere subspace of Type C is defined by
\begin{equation}\label{defofQC}
\Q_C = \left\{ \begin{bmatrix} 0 & x_1 & x_2 & x_3 \\ x_1 & 0 & \lambda_3x_3& \lambda_2x_2 \\ x_2 & \lambda_3x_3 & 0 &\lambda_1x_1 \\
x_3 & \lambda_2x_2 & \lambda_1x_1  & 0 \end{bmatrix}\right\}.
\end{equation}
where $x_1,x_2,x_3$ are linear coordinates on the subspace, and $\lambda_1, \lambda_2, \lambda_3$ are real parameters satisfying
\begin{equation}\label{lambdarel}
\lambda_1 \lambda_2 \lambda_3 + \lambda_1 + \lambda_2 + \lambda_3 = 0.
\end{equation}
An austere 4-manifold $M$ is of Type C if near any point there is an orthonormal
frame field $(\ve_1, \ve_2, \ve_3, \ve_4)$ with respect to which $|\II | \subset \Q_C$, for
parameters $\lambda_i$ which may vary smoothly along $M$.  Notice that
this condition is invariant under simultaneously permuting the frame vectors $(\ve_2, \ve_3, \ve_4)$
and the parameters $\lambda_i$.

We distinguish two cases, depending on whether or not $\ve_1$ is orthogonal to the ruling plane $E$.

\subsection*{Case C.1: $\ve_1\cdot E=0$}
In this case, the algebraic condition \eqref{IIalg} says that $E$, lying
in the span of $\ve_2, \ve_3, \ve_4$, is a 2-dimensional nullspace for every matrix in $|\II |$.  Thus, matrices
in $|\II|$ of the form in \eqref{defofQC} must satisfy
$$0=\det   \begin{bmatrix} 0 & \lambda_3x_3& \lambda_2x_2 \\
\lambda_3x_3& 0 & \lambda_1x_1\\  \lambda_2x_2 & \lambda_1x_1&0 \end{bmatrix}
=2\lambda_1\lambda_2\lambda_3x_1x_2x_3.$$
implying that either one of $x$'s is zero, or one of the $\lambda$'s, and one of these must hold on
an open set in $M$.

In the first case, we can assume that $|\II |\subset \Q_C$ is a 2-dimensional subspace defined by $x_1=0$ at each point.
Then the ruling plane is spanned by $\{\ve_3,\ve_4\}$, and $|\II|$ is also a subspace of $\Q_B$.
Swapping $\ve_3,\ve_4$ with $\ve_1,\ve_2$, we see that $M$ falls into case B.1 above.

In the second case, we can assume that $\lambda_1$ is identically zero along $M$.
Assuming first that $\delta=3$, an analysis of the standard system $\Iruled$ shows that
the remaining parameters must be constant along $M$.  Then Prop. 14 in \cite{ayeaye} implies that
$\lambda_1=\lambda_2=\lambda_3=0$ and $M$ is a generalized helicoid, swept out by 3-planes in $\R^7$.
On the other hand, if $\delta=2$ then  $E=\{\ve_3,\ve_4\}$ and the coordinates $x_1, x_2,x_3$ are linearly
related at each point.  An analysis of the system $\Iruled$ in this case, similar to that in the proof of Prop. \ref{cutdownB},
shows that no such manifolds exist.

\subsection*{Case C.2: $\ve_1\cdot E\not=0$} In this case, by permuting the frame vectors,
we can assume that $E$ is spanned by $\ve_1+a_3\ve_3+a_4\ve_4$ and $\ve_2+b_3\ve_3+b_4\ve_4$
for some functions $a_3,a_4, b_3, b_4$. The algebraic condition  \eqref{IIalg} implies that $\delta\leq 2$.
We assume that $\delta =2$.  Requiring that
a 2-dimensional subspace of $\Q_C$ satisfy the algebraic condition leads to the following subcases:\\
(i) $a_3=a_4=0$, $b_3\ne 0$, $b_4\ne 0$.  In this case, \eqref{lambdarel} implies that $\lambda_1=\lambda_2=\lambda_3=0$.
Then a change of basis shows that $|\II|$ is of Type B  and simple, falling into
case  B.1(a.iii) above; in particular $M$ is a generalized helicoid cone in $\R^6$.\\
(ii) $a_3=a_4= b_4= 0$, $b_3\not = 0$, $\lambda_3=0$.  An analysis of $\Iruled$ shows that
the only possibility is that $\lambda_1,\lambda_2$ vanish identically, in which case $M$ is again
a generalized helicoid cone.\\
(iii) $a_3=a_4= b_3=b_4 = 0$.  In this case, $|\II|$ is of Type B, falling into case B.1 above.\\
(iv) $a_4=b_3=0$, $a_3\not =0, b_4\not = 0$, $\lambda_1=-1/(a_3b_4)$, $\lambda_3=-b_4/a_3$.
A lengthy analysis of the integrability conditions for this case shows that the only possibility is that
$a_3=\pm1$ and $b_4=\pm1$.  Then an orthogonal change of basis shows that $|\II|$ is precisely of
the type described in Prop. \ref{cutdownB}, and thus $M$ is a product of helicoids as described by Prop. \ref{heliprop}.

\bigskip
We summarize our results for Type C in Theorem \ref{Ctheorem} stated in the Introduction.


\section{Normal Rank One}
In this section we investigate austere submanifolds $M^4$ where $|\II |$ is one-dimensional at each point,
and the Gauss map has rank two.  As we will see, this implies that $M$ is a hypersurface in $\R^5$ ruled by 2-planes; however, it is not known if a 2-ruled austere hypersurface of this dimension must have a degenerate Gauss map.

For a submanifold in Euclidean space, the kernel of the differential of the Gauss map is known as the
{\em relative nullity distribution} (see, e.g., \cite{Chen}), and consists at $p\in M$ of those
vectors $X$ such that $\II(X,Y)=0$ for all $Y\in T_pM$.  Thus, if $M^4$ is austere with a rank two Gauss map,
then we may choose an orthonormal frame so that $\ve_1, \ve_2$ span the relative nullity distribution, and
$|\II|$ is spanned by matrices of the form
\begin{equation}\label{hyperS}
\begin{bmatrix}0 & 0 & 0 & 0 \\0 & 0 & 0 & 0 \\0 & 0 & p & q \\0 & 0 & q & -p \end{bmatrix}.
\end{equation}

Accordingly, let $\Hon$ be the standard system for austere submanifolds
(described in \S\ref{stdsection}) defined on $\Fon \times \R_*^2$ with $p,q$ not both zero
as coordinates on the second factor,
and where matrix $S^5$ is given by \eqref{hyperS} and $S^6, \ldots, S^n$ are zero.
Any austere $M^4 \subset \R^n$, equipped with an orthonormal frame such that $\ve_1,\ve_2$ span the
relative nullity distribution, gives an integral of $\Hon$, and conversely.

\begin{theorem} $M$ is ruled by planes tangent to the relative nullity distribution,
and is contained in a totally geodesic $\R^5$.
\end{theorem}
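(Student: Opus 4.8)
The plan is to work on the integral submanifold $\Sigma\subset\Fon\times\R^2_*$ cut out by the adapted frame of $M$, where by construction $\w^a=0$ and $\w^a_i=S^a_{ij}\w^j$ with $S^5$ as in \eqref{hyperS} and $S^a=0$ for $a\ge 6$. Since the first two rows of every $S^a$ vanish, we have $\w^a_1=\w^a_2=0$ on $\Sigma$ for all normal indices $a$; this is exactly the statement that $\ve_1,\ve_2$ span the relative nullity distribution $E$. For the ruling assertion, the normal part \eqref{IIalg} of the criterion \eqref{Druling} holds automatically, because $\II(\ve_i,\ve_j)=0$ for $i,j\in\{1,2\}$; so it remains only to verify the tangential condition \eqref{Pruling}, namely that $\w^3_1,\w^4_1,\w^3_2,\w^4_2\equiv 0$ modulo $\w^3,\w^4$.

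To get \eqref{Pruling} I would differentiate $\w^5_1=\w^5_2=0$ using the structure equation \eqref{dconn} on the orthonormal frame bundle. Because $\w^b_1=\w^b_2=0$ for every $b$, the normal-connection terms drop out, leaving
$$\w^5_3\wedge\w^3_i+\w^5_4\wedge\w^4_i=0,\qquad i=1,2,$$
where $\w^5_3=p\,\w^3+q\,\w^4$ and $\w^5_4=q\,\w^3-p\,\w^4$. Writing $\w^3_i,\w^4_i$ modulo $\w^3,\w^4$ and reading off the coefficients of $\w^3\wedge\w^1,\w^4\wedge\w^1$ (and likewise with $\w^2$) produces, for each tangential direction, a homogeneous linear system with coefficient matrix $\left(\begin{smallmatrix} p & q \\ q & -p\end{smallmatrix}\right)$. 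Its determinant $-(p^2+q^2)$ is nonzero precisely because the Gauss map has rank two, i.e.\ $p,q$ are not both zero; hence the $\w^1$- and $\w^2$-components of $\w^3_i,\w^4_i$ all vanish, which is \eqref{Pruling}. The ruling criterion then shows $M$ is ruled by the $2$-planes tangent to $E$.

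For the second assertion I would prove that the $5$-dimensional osculating space $F_p:=\{\ve_1,\dots,\ve_5\}$ is a fixed subspace of $\R^n$, by showing $\mathsf D\ve_i\in F$ for every $i=1,\dots,5$. For $i\le 4$ this is immediate, since $\w^a_i=0$ for $a\ge 6$ and the only surviving normal term $\w^5_i$ keeps $\mathsf D\ve_i$ inside $F$. The one delicate point is $\mathsf D\ve_5$, which requires the normal-connection forms $\w^b_5$ to vanish for $b\ge 6$. To obtain this I would differentiate the relations $\w^b_i=0$ (for $b\ge 6$ and all $i$): the tangential terms vanish since $\w^b_j=0$ for every $j$, leaving $\w^b_5\wedge\w^5_i=0$ for $i=3,4$. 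As $\w^5_3$ and $\w^5_4$ are linearly independent (the same nonvanishing determinant), this forces $\w^b_5=0$. Hence $\mathsf D\ve_5\in F$, so $F$ is parallel and therefore constant; since $d\vp$ is tangential it also lies in $F$, and $M$ is contained in the affine translate $\vp_0+F$, a totally geodesic $\R^5$.

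Every computation here is of routine Codazzi type, and the single mechanism behind both conclusions is the invertibility of the matrix $\left(\begin{smallmatrix} p & q \\ q & -p\end{smallmatrix}\right)$ assembled from the entries of $S^5$. The only place the argument could fail is the degenerate locus $p=q=0$, which the rank-two hypothesis on the Gauss map excludes; so I anticipate no real obstacle beyond bookkeeping. One could alternatively invoke classical submanifold theory --- the totally geodesic leaves of the relative nullity foliation for the ruling, and the reduction-of-codimension theorem for a parallel first normal space for the $\R^5$ --- but the direct frame computation above is self-contained and stays within the paper's moving-frames setup.
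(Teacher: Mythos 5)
Your argument is correct and is essentially the paper's own proof: both parts come from differentiating the vanishing generators of $\Hon$ (the relations $\w^5_1=\w^5_2=0$ and $\w^b_i=0$ for $b>5$) and exploiting the invertibility of $\left(\begin{smallmatrix} p & q \\ q & -p\end{smallmatrix}\right)$, which is exactly the content of \eqref{Hon2forms} and the displayed formula for $d(\w^b_3-\ri\w^b_4)$, written there in complex notation. The only difference is that the paper pushes the same computation slightly further to extract the complex-linearity \eqref{hyperelts} (needed to set up $\Ion$ afterwards), whereas you stop at the weaker statement $\w^3_i,\w^4_i\equiv 0\bmod \w^3,\w^4$, which is all the theorem requires.
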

\begin{proof}  The `normal part' of the ruled condition \eqref{IIalg} holds automatically for the relative
nullity distribution.  The remaining requirement \eqref{Pruling} for $\ve_1, \ve_2$ to span a ruling
is that $\w^3_1, \w^3_2, \w^4_1,\w^4_2$ be multiples of $\w^3, \w^4$ along any integral manifold.

We will use complex-valued 1-forms to compute the structure equations of $\Hon$ and other related systems.
For example, when $n=5$ the 1-forms generating $\Hon$ are $\w^5$, $\theta^5_1=\w^5_1$,  $\theta^5_2 = \w^5_2$ and
\begin{equation}\label{Honctheta}
\theta^5_3 -\ri\theta^5_4 = (\w^5_3 - \ri \w^5_4) - (p-\ri q) (\w^3+\ri \w^4)
\end{equation}
Defining complex-valued 1-forms
$$\zeta = \w^3+ \ri \w^4, \quad \eta_1 = \w^3_1 +\ri \w^4_1, \quad \eta_2 = \w^3_2 +\ri \w^4_2,$$
we compute the nontrivial 2-forms of $\Hon$  as
\begin{equation}\label{Hon2forms}
\begin{aligned}
d\theta^5_1 &\equiv \realpart[ (p-\ri q)\eta_1\& \zeta ],\\
d\theta^5_2 &\equiv \realpart[ (p-\ri q)\eta_2\& \zeta ],\\
d(\theta^5_3 -\ri \theta^5_4) &\equiv (-d(p-\ri q) + 2\ri (p-\ri q) \w^4_3) \& \zeta + (p-\ri q)(\eta_1 \& \w^1 + \eta_2 \& \w^2)
\end{aligned}
\end{equation}
modulo the 1-forms of $\Hon$.
Thus, on any integral element
\begin{equation}\label{hyperelts}
\eta_1 = T_1 \zeta, \quad \eta_2 = T_2 \zeta
\end{equation}
for some complex-valued functions $T_1, T_2$.  In particular, $M$ is ruled.

When $n>5$, the additional 1-form generators are $\w^b$ and $\w^b_i$ for $b>5$.
It is easy to check that $d\w^b, d\w^b_1, d\w^b_2$ are congruent to zero modulo the 1-forms of $\Hon$,
while
$$d(\w^b_3-\ri\w^b_4) \equiv -(p-\ri q)\w^b_5 \& \zeta.$$
We deduce that the real-valued 1-forms $\w^b_5$ must vanish on all integral elements.
From the vanishing of $\w^b_1, \ldots, \w^b_5$ it follows, using \eqref{denustreqs},
that the subspace spanned by $\ve_1, \ldots \ve_5$ is fixed, and thus $M$ contained
in a 5-dimensional plane in $\R^n$.
\end{proof}

Without loss of generality, we will restrict our attention to austere hypersurfaces in $\R^5$ for the
rest of this section.  Let $\Ion$ be the Pfaffian system obtained by adding to $\Hon$ the
1-forms encoding the tangential part of the ruling condition; these are obtained from the real and imaginary parts
of the equations in \eqref{hyperelts}.  Altogether, the 1-form generators of $\Ion$ are
\begin{multline*}
\w^5, \ \w^5_1,\  \w^5_2,\  \w^5_3-p\w^3-q\w^4,\  \w^5_4-q\w^3+p\w^4, \\
\w^3_1 - s_1 \w^3 +t_1 \w^4,\  \w^4_1 -t_1 \w^3-s_1 \w^4,\
\w^3_2 - s_2 \w^3 +t_2\w^4,\  \w^4_2 -t_2 \w^3 -s_2\w^4,
\end{multline*}
where $s_i, t_i$ are the real and imaginary parts of $T_i$, $i=1,2$.
Then $\Ion$ is defined on $\Fon \times \R^2_* \times \R^4$, with the $s_i,t_i$ added as new variables.  It is easy to check that $\Ion$ is involutive
with Cartan character $s_1=6$.  Below, we will use the involutivity to show that such hypersurfaces
exist, passing through any generic curve in $\R^5$.

\begin{remark}  An analogue of the twistor spaces $V$, used to construct various Type A.1 ruled submanifolds in \S\ref{typeApart}, exists for this hypersurface case.  Namely, let $V$ be the space of flags $E\subset F$ in $\R^5$, where the subspaces have dimensions 2 and 4 respectively.
(This manifold is a homogeneous 8-dimensional quotient of $SO(5)$.)
Given an austere hypersurface $M$ with rank 2 Gauss map,
we define a rank 2 mapping $\Gamma:M\to V$ taking $p\in M$ to $(E_p, T_pM)$, where $E_p$ is
parallel to the ruling through $p$.  Then $\Gamma(M)$ is an integral surface
of a certain exterior differential system $\J$ on $V$, and any such surface is the locally the image
$\Gamma(M)$ for some austere hypersurface $M$ of this type.  (Details are left to the interested reader.)
The main difference between this case
and the Type A twistor spaces is that there is no homogeneous complex structure on $V$ with respect
to which $\Gamma(M)$ is a holomorphic curve.
\end{remark}

In closing, we consider a Cauchy
problem for austere hypersurfaces.  Let $\gamma(s)$ be a regular real-analytic curve, parametrized by arclength, which is substantial in $\R^5$ (i.e., it lies in no lower-dimensional totally geodesic submanifold).  It follows that there exist real-analytic orthonormal vectors $T, N_1, N_2$
and real-analytic functions $k_1, k_2$ such that
$$\dfrac{d\gamma}{ds} = T, \quad \dfrac{d T}{ds} = k_1 N_1, \quad \dfrac{d N_1}{ds} = -k_1 T + k_2 N_2.$$
We lift $\gamma$ to obtain a real-analytic integral curve of $\Ion$ in $\Fon\times \R^6$ as follows.
First, letting $\ve_3 = T$, $\ve_5=N_1$, $\ve_4=N_2$, and letting $\ve_1, \ve_2$ be any analytic
orthonormal vectors along $\gamma$ that are perpendicular at each point to the plane spanned by $\{ T, N_1, N_2\}$,
gives a lift into $\Fon$ which is an integral of $\w^5, \w^5_1$ and $\w^5_2$.  (As well, $\w^1, \w^2, \w^4$ pull back to be zero along $\gamma$, while $\w^3$ pulls back to equal $ds$.)
Next, we choose values for the
functions $p,q,s_1,t_1, s_2, t_2$ along $\gamma$ so as to give a lift into $\Fon\times \R^6$ which
is an integral curve of the remaining generators of $\Ion$.  (For example, since
$\omega^5_3= \ve_5 \cdot (d\ve_3/ds)$ and $\omega^5_4= \ve_5 \cdot (d\ve_4/ds)$, we set $p=k_1$ and $q=-k_2$.)

It follows by the Cartan-K\"ahler Theorem (see, e.g., \cite{CFB}), that there exists an
integral 4-manifold of $\Ion$, satisfying the independence condition, containing the lift of $\gamma$.
Our hypersurface $M$ is the projection of this 4-manifold into $\R^5$.
Note that the fact that $N_1$ is orthogonal to the hypersurface means that $\gamma$ is a geodesic in $M$.

\begin{prop}
 Let $\gamma$ be a real-analytic curve, substantial in $\R^5$.  Then there exists an austere 4-fold $M^4\subset \R^5$ with rank 2 Gauss map, such that $\gamma$ is a geodesic in $M$ and along $\gamma$ the ruling plane is orthogonal to the span of $\gamma', \gamma'', \gamma'''$.  $M$ is unique in the sense that
 any two such hypersurfaces must coincide in a neighborhood of $\gamma$.\end{prop}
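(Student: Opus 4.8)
The existence half is essentially the construction carried out just above the statement: applying the Cartan--K\"ahler Theorem to the lift of $\gamma$ yields an integral $4$-manifold of $\Ion$ whose projection $M\subset\R^5$ is austere with rank $2$ Gauss map, and the choices $\ve_3=T$, $\ve_5=N_1$, $\ve_4=N_2$ with $\ve_1,\ve_2$ orthonormal in $\operatorname{span}(T,N_1,N_2)^\perp$ make $\gamma$ a geodesic (its acceleration $\gamma''=k_1N_1=k_1\ve_5$ is purely normal) and force the ruling $E=\{\ve_1,\ve_2\}$ to be orthogonal to $\operatorname{span}(\gamma',\gamma'',\gamma''')=\operatorname{span}(T,N_1,N_2)$. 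Thus only the uniqueness clause remains to be proved.

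The structural input is that $\Ion$ is involutive with $s_1=6$ as its last nonzero Cartan character, so $s_2=s_3=s_4=0$. Hence each successive prolongation of an ordinary integral curve is forced, and the uniqueness clause of the Cartan--K\"ahler Theorem asserts that an ordinary real-analytic integral curve lies in a unique integral $4$-manifold of $\Ion$ near any of its points. Uniqueness of $M$ therefore reduces to showing that two hypersurfaces $M,M'$ with the stated properties induce the \emph{same} integral curve over $\gamma$, once a common gauge is fixed.

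First I would observe that every datum of the lift except the frame $\ve_1,\ve_2$ is intrinsic to $\gamma$. Indeed, for any such hypersurface the geodesic condition forces $\gamma''$ to be normal, so the unit normal is $\ve_5=N_1=\gamma''/|\gamma''|$ and the tangent space is $N_1^\perp$, both determined by $\gamma$; the Frenet frame gives $\ve_3=T$ and $\ve_4=N_2$; and the orthogonality hypothesis identifies the ruling as $E=\operatorname{span}(T,N_1,N_2)^\perp$, again depending on $\gamma$ alone. The curvatures fix the shape data, since $p=S^5_{33}=\ve_5\cdot D_{\ve_3}\ve_3=k_1$ and $q=S^5_{34}=\ve_5\cdot D_{\ve_3}\ve_4=-k_2$ are read off the Frenet equations. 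Finally, the parameters $s_\alpha,t_\alpha$ are the components $\ve_3\cdot\ve_\alpha'$ and $\ve_4\cdot\ve_\alpha'$ ($\alpha=1,2$) of the derivatives of the ruling frame along $\gamma$, hence depend only on $\gamma$ and on the chosen $\R^5$-valued functions $\ve_1(s),\ve_2(s)$, not on the ambient hypersurface. (Note moreover that any orthonormal frame of $E$ automatically satisfies $\ve_5\cdot\ve_\alpha'=0$: differentiating $\ve_\alpha\cdot N_1=0$ and using $N_1'=-k_1T+k_2N_2$ gives $\ve_\alpha'\cdot N_1=0$, so the lift is always an integral curve of $\w^5,\theta^5_1,\theta^5_2$.) Choosing the same $\ve_1(s),\ve_2(s)$ for $M$ and $M'$ therefore produces identical lifts, and Cartan--K\"ahler uniqueness forces the two integral $4$-folds, hence $M$ and $M'$, to agree near $\gamma$.

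The main obstacle is this last verification: one must check carefully that the off-diagonal parameters $s_\alpha,t_\alpha$, which a priori might encode how the tangent frame twists transverse to $\gamma$, are in fact completely determined by the ruling-plane frame along $\gamma$ and are independent of the hypersurface; and one must confirm that the lift of a substantial real-analytic $\gamma$ is a K\"ahler-ordinary integral curve, so that the uniqueness clause of the Cartan--K\"ahler Theorem genuinely applies.
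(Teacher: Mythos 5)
Your proposal is correct and follows essentially the same route as the paper: existence comes from the lift of $\gamma$ constructed in the preceding paragraph together with the Cartan--K\"ahler Theorem, and uniqueness comes from the fact that the hypotheses (geodesic, ruling orthogonal to $\operatorname{span}(\gamma',\gamma'',\gamma''')$) pin down the integral curve of $\Ion$ over $\gamma$ up to the gauge choice of frame in $E$, so the character count $s_1=6$, $s_2=s_3=s_4=0$ forces the integral $4$-fold through it to be unique. Your explicit verification that $p,q,s_\alpha,t_\alpha$ are determined by the Frenet data and the ruling frame is a faithful elaboration of the ``function count'' remark the paper gives in place of a formal proof.
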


The `function count' $s_1=6$ provided by the Cartan's Test may be interpreted as follows.  Choosing
a curve in $\R^5$ (up to rigid motions) depends on four functions of one variable, namely the Frenet curvatures.
Then we use up the remaining two function's worth of freedom by choosing
a lift into $\Fon$ along which $\w^1, \w^2, \w^4, \w^5, \w^5_1, \w^5_2$ are zero
and $\ve_5$ lies in the osculating plane of $\gamma$.

\end{document}